  \providecommand\BibTeX{{%
    \normalfont B\kern-0.5em{\scshape i\kern-0.25em b}\kern-0.8em\TeX}}}
\begin{document}

\title{Improvements to Quantum Interior Point Method for Linear Optimization}

\author{Mohammadhossein Mohammadisiahroudi}
\email{mom219@lehigh.edu}
\affiliation{%
  \institution{Lehigh University}
  \state{PA}
  \country{USA}
}
\email{mom219@lehigh.edu}
\author{Zeguan Wu}
\affiliation{%
  \institution{Lehigh University}
  \state{PA}
  \country{USA}}

\author{Brandon Augustino}
\affiliation{%
  \institution{Massachusetts Institute of Technology}
  \state{MA}
  \country{USA}
}

\author{Arielle Carr}
\affiliation{%
 \institution{Lehigh University}
 \state{PA}
 \country{USA}}

\author{Tamás Terlaky}
\affiliation{%
  \institution{Lehigh University}
 \state{PA}
 \country{USA}}

\renewcommand{\shortauthors}{Mohammadisiahroudi, et al.}

\begin{abstract}
  Quantum linear system algorithms (QLSA) have the potential to speed up Interior Point Methods (IPM).
  However, a major challenge is that QLSAs are inexact and sensitive to the condition number of the coefficient matrices of linear systems. This sensitivity is exacerbated when the Newton systems arising in IPMs converge to a singular matrix.
  Recently, an Inexact Feasible Quantum IPM (IF-QIPM) has been developed that addresses the inexactness of QLSAs and, in part, the influence of the condition number using iterative refinement.
  However, this method requires a large number of gates and qubits to be implemented. 
  Here, we propose a new IF-QIPM using the normal equation system, which is more adaptable to near-term quantum devices.
  To mitigate the sensitivity to the condition number, we use preconditioning coupled with iterative refinement to obtain better gate complexity.
  Finally, we demonstrate the effectiveness of our approach on IBM Qiskit simulators. 
\end{abstract}

\begin{CCSXML}
<ccs2012>
   <concept>
       <concept_id>10003752.10003809.10003716.10011138.10010041</concept_id>
       <concept_desc>Theory of computation~Linear programming</concept_desc>
       <concept_significance>500</concept_significance>
       </concept>
   <concept>
       <concept_id>10003752.10003809.10011254.10011259</concept_id>
       <concept_desc>Theory of computation~Preconditioning</concept_desc>
       <concept_significance>300</concept_significance>
       </concept>
   <concept>
       <concept_id>10003752.10003753.10003758.10003784</concept_id>
       <concept_desc>Theory of computation~Quantum complexity theory</concept_desc>
       <concept_significance>500</concept_significance>
       </concept>
 </ccs2012>
\end{CCSXML}

\ccsdesc[500]{Theory of computation~Linear programming}
\ccsdesc[500]{Theory of computation~Quantum complexity theory}
\ccsdesc[500]{Theory of computation~Preconditioning}

\keywords{Quantum Linear System Algorithm, Quantum Interior Point Method, Linear Optimization, Iterative Refinement, Preconditioning}

\received{...}
\received[revised]{...}
\received[accepted]{..}

\maketitle

\section{Introduction}
Mathematical optimization problems arise in many fields and their solution yields significant computational challenges. 
Researchers have attempted to develop quantum optimization algorithms, such as the Quantum Approximation Optimization Algorithm (QAOA) for unconstrained quadratic binary optimization problems
\cite{farhi2014quantum}, and a quantum subroutine for simplex algorithm \cite{nannicini2022fast}.
Another class of quantum algorithms are Quantum Interior Point Methods (QIPMs) \cite{Augustino2021_quantum, kerenidisParkas2020_quantum, mohammadisiahroudi2022efficient}, which are hybrid-classical IPMs that use QLSAs to solve the Newton system at each IPM iteration.
Before reviewing prior work on QIPMs for Linear Optimization problems (LOP), we provide the necessary definitions, fundamental results, and properties.
\begin{definition}[LOP: Standard Formulation]\label{def: LO-standard}
For $b\in \mathbb{R}^m$, $c\in \mathbb{R}^n$, and matrix $A\in \mathbb{R}^{m\times n}$ with $\rank(A)=m \leq n$, the LOP is defined as
\begin{equation*} 
    (\text{P})\quad
    \begin{aligned}
    \min\  c^T&x, \\
    {\rm s.t. }\;\;
    Ax &= b, \\
    x &\geq 0,
    \end{aligned}
    \qquad \qquad (\text{D}) \quad
    \begin{aligned}
    \max \  b^Ty,\ \ & \\
    {\rm s.t. }\;\;
    A^Ty +&s = c,\\
    &s \geq 0,
    \end{aligned}
\end{equation*}
where $x\in \Rmbb^n$ is the vector of primal variables, and $y\in\Rmbb^m$, $s\in\Rmbb^n$ are vectors of the dual variables.
Problem (P) is called primal problem and problem (D) is called dual problem. 
\end{definition}
The set of feasible primal-dual solutions is defined as
\begin{equation*}
\mathcal{PD}=\left\{(x,y,s)\in\mathbb{R}^{n} \times \mathbb{R}^m\times\mathbb{R}^n |\ Ax=b,\ A^Ty+s=c, \ (x,s)\geq0\right\}.
\end{equation*}
Then, the set of all feasible interior solutions is  defined as 
\begin{equation*}
\mathcal{PD}^0=\left\{(x,y,s)\in\mathcal{PD}\ |\ (x,s)>0\right\}.
\end{equation*}
In this work, we assume $\mathcal{PD}^0$ is not empty.
By the Strong Duality theorem, optimal solutions exist and belong to the
set $\mathcal{PD}^*$ defined as
\begin{equation*}
\mathcal{PD}^*=\left\{(x,y,s)\in\mathcal{PD} \ | \ x^Ts=0\right\}.
\end{equation*}
Let $\zeta \geq 0$, then the set of $\zeta$-optimal solutions is defined as
\begin{equation*}
\mathcal{PD}_{\zeta} = \left\{(x,y,s)\in\mathcal{PD} \ \Big|\ \frac{x^Ts}{n}\le \zeta \right\}.
\end{equation*}
In each step of IPMs, a Newton system is solved to determine the Newton step. There are four approaches:
\begin{enumerate}
    \item Full Newton System
\begin{equation}\label{eq: FNS}\tag{FNS}
    \begin{bmatrix}
0&A&0\\
A^T&0&I\\
0&S&X
\end{bmatrix}\begin{bmatrix}
\Delta y\\
\Delta x\\
\Delta s
\end{bmatrix}=\begin{bmatrix}
0\\0\\
\beta \mu e- Xs
\end{bmatrix};
\end{equation}
\item Augmented System
\begin{equation}\label{eq: AS}\tag{AS}
    \begin{bmatrix}
0&A\\
A^T&-D^{-2}
\end{bmatrix}\begin{bmatrix}
\Delta y\\
\Delta x
\end{bmatrix}=\begin{bmatrix}
0\\
s- \beta \mu X^{-1}e
\end{bmatrix};
\end{equation}
\item Normal Equation System
\begin{equation}\label{eq: NES}\tag{NES}
AD^2A^T \Delta y= Ax- \beta \mu A S^{-1}e;
\end{equation}
\item Orthogonal Subspaces System
\begin{equation}\label{eq: OSS}\tag{OSS}
  \begin{bmatrix}
-XA^T&SV
\end{bmatrix}\begin{bmatrix}
\Delta y\\
\lambda
\end{bmatrix}=\beta \mu e- Xs,
\end{equation}
\end{enumerate}
where $X=\text{diag}(x)$, $S=\text{diag}(s)$, $D=S^{-1/2}X^{1/2}$, $\mu=\frac{x^Ts}{n}, \beta$, $0<\beta<1$, and $e$ is an all-one vector. Further, the columns of $V$ form a basis for the null space of $A$. 
\begin{table}[h]
    \centering
\begin{tabular}{ |c|c|c|c|c| } 
 \hline
 System & Size of system & Symmetric & Positive Definite&  Rate of Condition Number\\ 
 \hline
\ref{eq: FNS} & $2n+m$& \xmark& \xmark&  $\Ocal\big(\frac{1}{\mu^2}\big)$\\
\ref{eq: AS}& $n+m$& \cmark& \xmark&  $\Ocal\big(\frac{1}{\mu^2}\big)$\\
\ref{eq: NES} & $m$ & \cmark& \cmark& $\Ocal\big(\frac{1}{\mu^2}\big)$\\
\ref{eq: OSS}  & $n$ & \xmark& \xmark&  $\Ocal\big(\frac{1}{\mu}\big)$\\
\hline
\end{tabular} 
\caption{Characteristics of Coefficient Matrix of Different Newton Systems}
\label{tab: systems}
\end{table}

Table \ref{tab: systems} shows that NES has a smaller size since in most practical LO problems $m<<n$. 
In addition, its symmetric positive definite coefficient matrix is favorable since classically it can be solved faster with Cholesky factorization or conjugate gradient. 
It is also more adaptable to QLSAs since QLSAs are able to solve linear systems with a Hermitian matrix. 
To solve linear systems whose matrix is not Hermitian, like OSS, they must be embedded in a bigger system with a Hermitian matrix. 
Thus, NES has a better structure compared to others, however, its condition number grows at a faster rate than the one of the OSS.
The key takeaway is that inexact solutions of NES, FNS, and AS may lead to infeasibility, whereas inexact solutions to OSS remain in the feasible region \cite{IF-QIPMforLO}. 

The prevailing approach to solve LOPs is IPMs, in which the Newton direction is calculated by solving NES using Cholesky factorization \cite{Roos2005_Interior}. 
Thus, although IPMs enjoy a fast convergence rate, the cost per iteration of IPMs is considerably high when applied to large-scale LOPs. 
In an effort to reduce the per-iteration cost of IPMs, inexact infeasible IPMs (II-IPMs) were proposed, in which the Newton system is solved with an iterative method, e.g., using conjugate gradient (CG) methods \cite{Monteiro2003_Convergence, Gondzio2009_Convergence}. 
Inexact linear systems algorithms like CG exhibit favorable dependence on dimension compared to factorization methods and are able to exploit sparsity patterns present in the Newton system. 
The rub is that these inexact approaches depend on a condition number bound, which could pose a challenge.
To tackle the ill-conditioned Newton system, they used the so-called maximum weight basis (MWB) preconditioner \cite{monteiro2004uniform}. 

QIPMs were first proposed by Kerenidis and Prakash \cite{kerenidisParkas2020_quantum}, who sought to decrease the cost per iteration by classically estimating the Newton step through the use of a QLSA and quantum state tomography. 
Adopting this approach, Casares and Martin-Delgado \cite{Casares2020_quantum} developed a predictor-correcter QIPM for LO.
However, these algorithms were proposed and analyzed using an \textit{exact} IPM framework, which is invalid because the use of quantum subroutines naturally introduces noise into the solution and leads to \textit{inexactness} in the Newton step. Specifically, without further safeguards this inexactness means that the sequence of iterates generated the algorithms in \cite{Casares2020_quantum, kerenidisParkas2020_quantum} may leave the feasible set, and so convergence cannot be guaranteed. 

To address these issues, Augustino et al.~\cite{Augustino2021_quantum} proposed an inexact-infeasible QIPM (which closely quantized the II-IPM of \cite{Zhou2004_Polynomiality}) and a novel \textit{inexact-feasible} QIPM using OSS. The latter algorithm was shown to solve LOPs to precision $\epsilon \in (0,1)$ using at most 
$$\widetilde{\Ocal}_{n, \kappa, \frac{1}{\epsilon}} \left( n^2 \frac{\kappa^2}{\epsilon} \right)$$
QRAM queries and $\widetilde{\Ocal}_{n, \kappa, \frac{1}{\epsilon}} \left( n^{2.5} \right)$ arithmetic operations, where $\kappa$ is an upper bound on the Newton system coefficient matrices that arise over the run of the algorithm. 

Mohammadisiahroudi et al.~\cite{mohammadisiahroudi2022efficient, IF-QIPMforLO} specialized the algorithms in \cite{Augustino2021_quantum} to LO and used iterative refinement techniques to exponentially improve the dependence of the algorithms in \cite{Augustino2021_quantum} on precision and the condition number bound. In particular, \cite{mohammadisiahroudi2022efficient} developed an inexact-infeasible QIPM (II-QIPM), which addresses the inexactness of QLSA, with $$\widetilde{\Ocal}_{n, \kappa_A, \omega}(n^4L\|A\|^4\omega^{4}\kappa_A^2)$$ complexity, where $\omega$ is an upper bound for norm of optimal solution. \cite{IF-QIPMforLO} improved this complexity by developing a short-step IF-QIPM for LOPs with complexity  
$$\widetilde{\Ocal}_{n, \kappa_A, \omega}(n^{2.5}L\|A\|^2\omega^2\kappa_A).$$

Note that the use of iterative refinement techniques indirectly led to another improvement in the complexity, reducing the dependence on a condition number bound $\kappa$ for the intermediate Newton systems with the condition number $\kappa_A$ of the input matrix $A$. 
IF-QIPMs built on similar techniques have also been developed for linearly constrained quadratic optimization problems in \cite{wu2023inexact} and second-order cone optimization problems in \cite{augustino2021inexact}. 

In this paper, we propose an IF-QIPM using a modified normal equation system that has a better structure, with a smaller symmetric positive definite coefficient matrix, and enables us to use classical preconditioning techniques to mitigate the effect of the condition number. 
In addition, we explore how the preconditioning technique augmented with iterative refinement can help with condition number issues. 

The rest of this paper is structured as follows. 
In Section~\ref{sec: MNES},  a modified NES is utilized to produce an inexact but feasible Newton step, and a short-step Inexact Feasible IPM is developed. 
Section~\ref{sec: IF-QIPM} explores how we use QLSA to solve the modified NES system in order to develop an IF-QIPM.  
In Section~\ref{sec: condition}, an iterative refinement method coupled with preconditioning is developed to address the impacts of the condition number on the complexity.
Finally, numerical experiments using the IBM Qiskit simulator are carried out in Section~\ref{sec: numerical}, and Section~\ref{sec: conclusion} concludes the paper.

\section{Inexact-Feasible Newton Step using NES}\label{sec: MNES}
To compute the Newton step, we need to determine $(\Delta x, \Delta y, \Delta s)$ such that
\begin{equation}\label{eq:newton system}
\begin{aligned}
    A\Delta x&=0,\\
    A^T \Delta y +\Delta s&=0,\\
    X\Delta s+S\Delta x&=\beta \mu e-Xs.
\end{aligned} 
\end{equation}
As discussed in the introduction, we are interested in using NES as
it has a smaller symmetric positive definite matrix, favorable for both quantum and classical linear system solvers. First note that an exact solution $\Delta y$ to NES satisfies
\begin{equation}\label{e:NES}
AD^2A^T \Delta y= Ax- \beta \mu A S^{-1}e. 
\end{equation}
Having obtained $\Delta y$, we then compute $\Delta s$ and $\Delta x$ using as follows:
\begin{subequations}
    \begin{align}
    \Delta s&=-A^T \Delta y ,\label{eq:deltaS}\\
    \Delta x&=\beta \mu S^{-1}e-x-D^2\Delta s.\label{eq:deltaX}
\end{align}
\end{subequations}
Now, when system \eqref{e:NES} is solved inexactly, the resulting solution $\overline{\Delta y}$ satisfies
\begin{equation*}
AD^2A^T \overline{\Delta y}= Ax- \beta \mu A S^{-1}e+r, \end{equation*}
where $r$ is the residual as $\|Ax- \beta \mu A S^{-1}e- AD^2A^T \overline{\Delta y}\|$. In place of (\ref{eq:deltaS}) and (\ref{eq:deltaX}), we now have
\begin{equation*}
\begin{aligned}
    A\Delta x&=r,\\
    A^T \Delta y +\Delta s&=0,\\
    X\Delta s+S\Delta x&=\beta \mu e-Xs.
\end{aligned} 
\end{equation*}
While dual feasibility is preserved, the same cannot be said for the primal. In order to preserve primal feasibility using inexact solutions to \eqref{e:NES}, one can alternatively solve
\begin{align*}
    \Delta s&=-A^T \Delta y ,\\
    \Delta x&=\beta \mu S^{-1}e-x-D^2\Delta s-v,
\end{align*}
where $Av=r$. Updating $\Delta x$ in this way, we \textit{correct} primal infeasibility, and one can verify that 
\begin{equation*}
\begin{aligned}
    A\Delta x&=0,\\
    A^T \Delta y +\Delta s&=0,\\
    X\Delta s+S\Delta x&=\beta \mu e-Xs+r',
\end{aligned} 
\end{equation*}
where $r'=-Sv$. Next, we describe two procedures to calculate $v$ efficiently.

\textbf{Procedure A.}
Since $A$ has full row rank, we can calculate $$\hat{A}=A^T(AA^T)^{-1},$$ 
as a pre-processing step before the IPM starts. Then, in each iteration, we calculate $v=\hat{A}r$ using classical matrix-vector products. To recover the convergence analysis of \citep{IF-QIPMforLO}, the residual must satisfy $\|r'\|\leq\eta \mu$ for $\eta \in [0,1)$. One can show that this requirement amounts to
$$\|r\|\leq \eta \frac{\mu}{\|s\|_{\infty}\sigma_{\max}},$$
where $\sigma_{max}$ is the maximum singular value of $A$. Since $\|s\|_{\infty}$ and $\sigma_{\max}$ can be exponentially large, this residual bound can be unacceptably small.

\textbf{Procedure B.}
Letting $\Bhat$ be an arbitrary basis for matrix $A$, we can calculate
$$v=(v_{\Bhat},v_N)=(A_{\Bhat}^{-1}r,0).$$
It is straightforward to verify that $Av=r$. Now, we show that this procedure coupled with an appropriate modification of the NES leads to a favorable residual bound. 

Since $A$ has full row rank, one can choose an arbitrary basis $\Bhat$, and subsequently calculate $A_{\Bhat}^{-1}$, $\Ahat=A_{\Bhat}^{-1}A$, and $\bhat=A_{\Bhat}^{-1}b$. These steps require $\Ocal(m^2n)$ arithmetic operations and take place only once prior to the first iteration of IPM. The cost of this pre-processing can be reduced by leveraging the structure of $A$. For example, if the problem is in the canonical form, there is no need for this pre-processing.
In this paper, we neglect the pre-processing cost, since it can be avoided by using the following reformulation.
\begin{equation*} 
    \begin{aligned}
    \min\  c^T&x, \\
    {\rm s.t. }\;\;
    Ax +u&= b, \\
    -Ax +u'&= -b, \\
    x,u,u' &\geq 0.
    \end{aligned}
\end{equation*}
This is a standard LOP, but its interior is empty. This issue is remedied upon using the self-dual embedding model \cite{Ye1994_iteration} and we refer the readers to \cite{IF-QIPMforLO} for details.
While this formulation does not require calculation, the price one pays for this case is using a larger system. In the rest of this paper, we assume that we are working with the preprocessed problem with input data $(\hat{A},\hat{b},c)$.

Now, we can modify system \eqref{eq: NES} with coefficient matrix $M^k=A(D^k)^2A^T$ and right-hand side vector $\sigma^k =b-\beta\mu^kA(S^k)^{-1}e$ to 
\begin{equation*}\tag{MNES}\label{eq: modified normal equation}
    \Mhat^k z^k=\hat{\sigma}^k
\end{equation*}
where
\begin{align*}
    \Mhat^k&=(D_{\Bhat}^{k})^{-1}A_{\Bhat}^{-1}M^k((D_{\Bhat}^{k})^{-1}A_{\Bhat}^{-1})^T=(D_{\Bhat}^{k})^{-1}\Ahat (D^k)^2\Ahat^T(D_{\Bhat}^{k})^{-1},\\
\hat{\sigma}^k&=(D_{\Bhat}^{k})^{-1}A_{\Bhat}^{-1}\sigma^k=(D_{\Bhat}^{k})^{-1}\bhat-\beta_1 \mu^k (D_{\Bhat}^{k})^{-1}\Ahat(S^k)^{-1}e.
\end{align*}
We use the following procedure to find the Newton direction by solving \eqref{eq: modified normal equation} inexactly with QLSA+QTA.
\begin{enumerate}
    \item []\textbf{Step 1.} Find $\tilde{z}^k$ such that $\Mhat^k\tilde{z}^k=\hat{\sigma}^k+\rhat^k$ and $\|\rhat^k\|\leq\frac{\eta}{\sqrt{1+\theta}}\sqrt{\mu^k}$.
    \item []\textbf{Step 2.} Calculate $\widetilde{\Delta y}{}^k=((D_{\Bhat}^{k})^{-1}A_{\Bhat}^{-1})^T\tilde{z}^k$.
    \item []\textbf{Step 3.} Calculate $v^k=(v^k_{\Bhat},v^k_{\Nhat})=(D_{\Bhat}^{k}\rhat^k,0)$.
    \item []\textbf{Step 4.} Calculate $\widetilde{\Delta s}{}^k=c-A^Ty^k-s^k-A^T\widetilde{\Delta y}{}^k$.
    \item []\textbf{Step 5.} Calculate $\widetilde{\Delta x}{}^k=\beta_1\mu^k (S^{k})^{-1}e-x^k-(D^k)^2\widetilde{\Delta s}{}^k-v^k$.
\end{enumerate}
It is noteworthy that, this modification technique is similar to MWB preconditioning techniques of \cite{Monteiro2003_Convergence, Gondzio2009_Convergence}. One major difference is that we modify the NES for a feasible IPM setting, although others apply it for infeasible IPMs. In addition, we preprocess the data initially, before starting IPM although in preconditioning, one needs to do the modification, calculating the precondition with $\Ocal(n^3) cost$, in each iteration. Thus, we show that the complexity of our approach has better dimension dependence $\Ocal(n^{2.5})$ although the complexity of other infeasible approaches with preconditioning has $\Ocal(n^{5})$ dimension dependence. In Section~\ref{sec: condition}, we explore how quantum computing can speed up the preconditioning part.

\begin{lemma}\label{lem: newton form}
For the Newton direction $(\widetilde{\Delta x}{}^k,\widetilde{\Delta y}{}^k,\widetilde{\Delta s}{}^k)$, we have
\begin{equation} \label{eq:approximate modified newton system}
    \begin{aligned}
    A\widetilde{\Delta x}{}^k&=0,\\
    A^T \widetilde{\Delta y}{}^k +\widetilde{\Delta s}{}^k&=0,\\
    X^k\widetilde{\Delta s}{}^k+S^k\widetilde{\Delta x}{}^k&=\beta_1 \mu^k e-X^ks^k+{r'}^k.
\end{aligned}
\end{equation}
where ${r'}^k=-S^kv^k$.
\end{lemma}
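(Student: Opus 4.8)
The plan is to verify the three equations of \eqref{eq:approximate modified newton system} one at a time, exploiting the fact that the current iterate $(x^k,y^k,s^k)$ is feasible and therefore satisfies $Ax^k=b$ and $A^Ty^k+s^k=c$. The second and third equations follow by direct substitution of the defining formulas in Steps 4 and 5, whereas the first equation requires unwinding the preconditioning transformation built into \eqref{eq: modified normal equation} and is the crux of the argument.

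For the second equation I would substitute the expression for $\widetilde{\Delta s}{}^k$ from Step 4 into $A^T\widetilde{\Delta y}{}^k+\widetilde{\Delta s}{}^k$; the two $A^T\widetilde{\Delta y}{}^k$ contributions cancel and what remains is $c-A^Ty^k-s^k$, which vanishes by dual feasibility of the iterate. For the third equation I would multiply the Step 5 formula for $\widetilde{\Delta x}{}^k$ by $S^k$ and apply the diagonal identities $S^kx^k=X^ks^k$ and $S^k(D^k)^2=X^k$ (the latter coming from $D^k=(S^k)^{-1/2}(X^k)^{1/2}$). Rearranging then yields exactly
$$X^k\widetilde{\Delta s}{}^k+S^k\widetilde{\Delta x}{}^k=\beta_1\mu^ke-X^ks^k-S^kv^k,$$
and substituting ${r'}^k=-S^kv^k$ closes this case.

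The main work is the first equation, and the key is to establish the relation
$$A(D^k)^2A^T\widetilde{\Delta y}{}^k=\sigma^k+Av^k,$$
which recovers the inexact NES residual in the \emph{original} coordinates, where $\sigma^k=b-\beta_1\mu^kA(S^k)^{-1}e$. To obtain it I would left-multiply the inexact system $\Mhat^k\tilde{z}^k=\hat{\sigma}^k+\rhat^k$ of Step 1 by $A_{\Bhat}D_{\Bhat}^k$. Using $\Mhat^k=(D_{\Bhat}^{k})^{-1}A_{\Bhat}^{-1}M^k((D_{\Bhat}^{k})^{-1}A_{\Bhat}^{-1})^T$ with $M^k=A(D^k)^2A^T$, together with $\hat{\sigma}^k=(D_{\Bhat}^{k})^{-1}A_{\Bhat}^{-1}\sigma^k$ and the identification $\widetilde{\Delta y}{}^k=((D_{\Bhat}^{k})^{-1}A_{\Bhat}^{-1})^T\tilde{z}^k$ from Step 2, the left side collapses to $M^k\widetilde{\Delta y}{}^k$ and the right side to $\sigma^k+A_{\Bhat}D_{\Bhat}^k\rhat^k$. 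The decisive observation is that the choice $v^k=(v^k_{\Bhat},v^k_{\Nhat})=(D_{\Bhat}^{k}\rhat^k,0)$ in Step 3 gives $Av^k=A_{\Bhat}v^k_{\Bhat}+A_{\Nhat}v^k_{\Nhat}=A_{\Bhat}D_{\Bhat}^k\rhat^k$, so the transformed residual equals precisely $Av^k$.

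With this relation in hand I would finish the first equation by applying $A$ to the Step 5 expression for $\widetilde{\Delta x}{}^k$, replacing $A(D^k)^2\widetilde{\Delta s}{}^k$ by $-A(D^k)^2A^T\widetilde{\Delta y}{}^k$ via the already-proved second equation $\widetilde{\Delta s}{}^k=-A^T\widetilde{\Delta y}{}^k$, and then substituting the key relation. The $\beta_1\mu^kA(S^k)^{-1}e$ terms and the $Av^k$ terms cancel in pairs, leaving $b-Ax^k=0$ by primal feasibility. The only real obstacle I anticipate is the bookkeeping in the key relation: carrying the two-sided preconditioner $(D_{\Bhat}^{k})^{-1}A_{\Bhat}^{-1}$ and its transpose through the multiplication, and confirming that the specific $v^k$ engineered in Step 3 is exactly what routes the residual into $Av^k$ so that it cancels, rather than leaking into primal infeasibility.
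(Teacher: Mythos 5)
Your proposal is correct and follows essentially the same route as the paper: it derives the key relation $M^k\widetilde{\Delta y}{}^k=\sigma^k+A_{\Bhat}D_{\Bhat}^{k}\rhat^k$ by unwinding the preconditioner, identifies $A_{\Bhat}D_{\Bhat}^{k}\rhat^k=Av^k$ from the construction of $v^k$, and then expands $A\widetilde{\Delta x}{}^k$ via Step 5 and the dual equation so that everything cancels by primal feasibility. The treatment of the second and third equations by direct substitution from Steps 4 and 5 also matches the paper, merely spelled out in more detail.
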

\begin{proof}
For the Newton direction $(\widetilde{\Delta x}{}^k,\widetilde{\Delta y}{}^k,\widetilde{\Delta s}{}^k)$, one can verify that $\Mhat^k\tilde{z}^k=\hat{\sigma}^k+\rhat^k$ implies
\begin{align*}
    M^k\widetilde{\Delta y}{}^k&=\sigma^k+A_{\Bhat}D_{\Bhat}^{k}\rhat^k.
\end{align*}
For the first equation of \eqref{eq:approximate modified newton system}, we can write
\begin{align*}
    A\widetilde{\Delta x}{}^k=&A(\beta_1\mu^k (S^{k})^{-1}e-x^k-(S^{k})^{-1}X^k\widetilde{\Delta s}{}^k-v^k)\\
    =&A(\beta_1\mu^k (S^{k})^{-1}e-x^k-(S^{k})^{-1}X^k(-A^T\widetilde{\Delta y}{}^k)-v^k)\\
    =&\beta_1\mu^k A(S^{k})^{-1}e-Ax^k+A(S^{k})^{-1}X^k A^T\widetilde{\Delta y}{}^k-Av^k\\
    =&\beta_1\mu^k A(S^{k})^{-1}e-Ax^k+\sigma^k+A_{\Bhat}D_{\Bhat}^{k}\rhat^k-Av^k\\
    =&0 .
\end{align*}
The second and third equations of \eqref{eq:approximate modified newton system} are obtained from Steps 4 and 5.
\end{proof}
To have a convergent IPM, we need $\|{r'}^k\|_{\infty}\leq \eta \mu^k$, where $0\leq\eta< 1$ is an enforcing parameter. The next lemma gives an analogous residual bound for the modified NES.
\begin{lemma}\label{lem: residual bound}
For the Newton direction $(\widetilde{\Delta x}{}^k,\widetilde{\Delta y}{}^k,\widetilde{\Delta s}{}^k)$, if the residual $\|\rhat^k\|_{\infty}\leq\frac{\eta}{\sqrt{1+\theta}} \sqrt{\mu^k}$, then $\|{r'}^k\|_{\infty}\leq \eta \mu^k$.
\end{lemma}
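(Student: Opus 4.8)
The plan is to unwind the definition of ${r'}^k$ furnished by Lemma~\ref{lem: newton form} and convert the hypothesis on $\rhat^k$ into a bound on ${r'}^k$ by a direct diagonal-scaling argument, using the central-path neighborhood to control the scaling factors.

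First I would substitute the expression for $v^k$ from Step~3 into the identity ${r'}^k=-S^kv^k$ given by Lemma~\ref{lem: newton form}. Since $v^k=(v^k_{\Bhat},v^k_{\Nhat})=(D_{\Bhat}^k\rhat^k,0)$, only the basic block survives: the nonbasic components of ${r'}^k$ vanish, and the basic block is ${r'}^k_{\Bhat}=-S_{\Bhat}^kD_{\Bhat}^k\rhat^k$. Next I would simplify the diagonal product $S_{\Bhat}^kD_{\Bhat}^k$ using the definition $D=S^{-1/2}X^{1/2}$, which yields $S_{\Bhat}^kD_{\Bhat}^k=(X_{\Bhat}^kS_{\Bhat}^k)^{1/2}$. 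Hence ${r'}^k_{\Bhat}=-(X_{\Bhat}^kS_{\Bhat}^k)^{1/2}\rhat^k$ is nothing more than $\rhat^k$ rescaled entrywise by $\sqrt{x_i^ks_i^k}$. Taking the infinity norm and pulling out the largest diagonal entry gives $\|{r'}^k\|_{\infty}\leq \big(\max_{i}\sqrt{x_i^ks_i^k}\big)\,\|\rhat^k\|_{\infty}$.

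The main obstacle is controlling $\max_i \sqrt{x_i^k s_i^k}$, and this is exactly where the parameter $\theta$ enters. I would invoke the fact that the iterates remain in the central-path neighborhood (defined by $\|X^ks^k-\mu^ke\|\leq\theta\mu^k$), which forces $x_i^ks_i^k\leq(1+\theta)\mu^k$ in every coordinate, and therefore $\max_i\sqrt{x_i^ks_i^k}\leq\sqrt{(1+\theta)\mu^k}$. Combining this with the hypothesis $\|\rhat^k\|_{\infty}\leq\frac{\eta}{\sqrt{1+\theta}}\sqrt{\mu^k}$ makes the factor $\sqrt{1+\theta}$ cancel, leaving $\|{r'}^k\|_{\infty}\leq\sqrt{(1+\theta)\mu^k}\cdot\frac{\eta}{\sqrt{1+\theta}}\sqrt{\mu^k}=\eta\mu^k$, as claimed.

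The only point requiring care is that the neighborhood bound really delivers the coordinatewise inequality $x_i^ks_i^k\leq(1+\theta)\mu^k$ in whichever norm the IPM's neighborhood is stated; once that coordinatewise bound is in hand, everything else is routine diagonal algebra. The calibration of the constant $\frac{1}{\sqrt{1+\theta}}$ in Step~1 is precisely engineered so that the $\sqrt{1+\theta}$ coming from the neighborhood cancels exactly, which is a useful sanity check that the intended neighborhood is indeed the $2$-norm neighborhood with parameter $\theta$.
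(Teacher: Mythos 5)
Your proposal is correct and follows essentially the same route as the paper's own proof: restrict to the basic block via $v^k=(D_{\Bhat}^k\rhat^k,0)$, use $S_{\Bhat}^kD_{\Bhat}^k=(X_{\Bhat}^kS_{\Bhat}^k)^{1/2}$, bound the diagonal entries by $\sqrt{(1+\theta)\mu^k}$ from the $\mathcal{N}(\theta)$ condition, and cancel the $\sqrt{1+\theta}$ factor against the hypothesis on $\|\rhat^k\|_{\infty}$. Your closing remark that the $2$-norm neighborhood bound delivers the needed coordinatewise inequality $x_i^ks_i^k\leq(1+\theta)\mu^k$ is exactly the (implicit) first step of the paper's argument.
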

\begin{proof}
As $(x^k,y^k,s^k)\in \Ncal(\theta)$, we have
$$(1-\theta)\mu^k\leq x_is_i\leq(1+\theta)\mu^k.$$
Thus, $$\|(S^k_{\Bhat})^{1/2}(X_{\Bhat}^{k})^{1/2}\|_{\infty}=\max_{i\in\Bhat} \sqrt{x_is_i}\leq \max_{i=1}^n \sqrt{x_is_i}\leq \sqrt{(1+\theta)\mu^k}.$$
Now we can conclude that
\begin{align*}
    \|{r'}^k\|_{\infty}=\|S^kv^k\|_{\infty}=\|S^k_{\Bhat}v^k_{\Bhat}\|_{\infty}=\|S^k_{\Bhat}D_{\Bhat}^{k}\rhat^k\|_{\infty}\leq \|(S^k_{\Bhat})^{1/2}(X_{\Bhat}^{k})^{1/2}\|_{\infty}\|\rhat^k\|_{\infty}\leq\sqrt{(1+\theta)\mu^k}\|\rhat^k\|_{\infty}\leq\eta\mu^k.
\end{align*}
\end{proof}
In the next subsection, we analyze the complexity of QLSA+QTA to solve the \eqref{eq: modified normal equation}.
\subsection{Solving MNES by QLSA+QTA}
The first QSLA was proposed by Harrow, Hassidim and Loyd~\cite{Harrow2009_quantum} and known as HHL algorithm. It takes as input a sparse, Hermitian matrix $M$, and prepares a state $\ket{z} = \ket{M^{-1} \sigma}$ that is proportional to the solution of the linear system $ Mz = \sigma$.
Let $\kappa_M$ denote the condition number of $M$. 
The complexity of the HHL algorithm is $\Tilde{\Ocal}_{d} ({\tau^2\kappa_M^2}/{\epsilon})$, where $d$ is the dimension of the problem, $\tau$ is the maximum number of non-zeros found in any row of $M$, $\epsilon$ is the target bound on the error, and the $\tilde{\Ocal}$ notation suppresses the polylogarithmic factors in the "Big-O" notation in terms of the subscripts. 
This complexity bound shows a speed-up w.r.t. dimension, although it depends on an upper bound for the condition number $\kappa_M$ of the coefficient matrix.
Following a number of improvements to HHL algorithm~\cite{ambainis2012variable, wossnig2018quantum, vazquez2022enhancing, childs2017quantum}, the current state-of-the-art QLSA is attributed to Charkraborty et al.~\cite{Blockencoding}, who use variable-time \textit{amplitude estimation} and so-called \textit{block-encoded} matrices, while HHL algorithm uses the \textit{sparse-encoding} model \cite{Harrow2009_quantum}. 
The block-encoding model was formalized in \cite{low2019hamiltonian}, and it assumes that one has access to unitaries that store the coefficient matrix in their top-left block:
$$ U = \begin{pmatrix} M/\psi & \cdot \\
\cdot & \cdot \end{pmatrix},$$
where $\psi \geq \| M \|$ is a normalization factor chosen to ensure that $U$ has operator norm at most~$1$.
Assuming access to QRAM, the QLSA of \cite{Blockencoding} has $\Tilde{\Ocal}_{d,\kappa_M, \frac{1}{\epsilon} }\left(\kappa_M \psi \right)$ complexity.

While QLSAs provide a quantum state proportional to the solution, 
it is not possible to extract the classical solution by a single measurement. 
Quantum Tomography Algorithms (QTAs) are needed to extract the classical solution. 
There are several papers improving QTAs, and the best QTA \cite{van2023quantum} has $\Ocal(\frac{d\varrho}{\epsilon})$  complexity, where $\varrho$ is a bound for the norm of the solution. 
The direct use of the QLSA from \cite{Blockencoding} and the QTA by \cite{van2023quantum} costs $\Tilde{\Ocal}_{d,\kappa_M, \frac{1}{\epsilon} }\left(d\kappa_M^2 \|M\|_F \frac{\|\sigma\|}{\epsilon}\right)$.
Mohammadisiahroudi et al. \cite{mohammadisiahroudi2023Accurately} used an iterative refinement approach employing limited precision QLSA+QTA with $\Tilde{\Ocal}_{d,\kappa_M, \frac{\|\sigma\|}{\epsilon} }\left(d\kappa_M \|M\|_F \right)$ complexity with $\Tilde{\Ocal}_{ \frac{\|\sigma\|}{\epsilon} }\left(d^2 \right)$ classical arithmetic operations.

\begin{theorem} \label{theo: QLSA solution}
 Using the linear solver of \cite{mohammadisiahroudi2023Accurately}, the \eqref{eq: modified normal equation} system can be built and solved to obtain a solution ${\widetilde{z}}^k$ satisfying $\|\rhat^k\|\leq\frac{\eta}{\sqrt{1+\theta}}\sqrt{\mu^k}$ with 
 with $\Tilde{\Ocal}_{m,\kappa_E^k, \frac{\|\hat{\sigma}^k\|}{\mu^k} }\left(m\kappa_E^k \|E^k\|_F \right)$ complexity and $\Tilde{\Ocal}_{ \frac{\|\hat{\sigma}^k\|}{\mu^k} }\left(mn \right)$ classical arithmetic operations, where $E^k=(D^k_{\Bhat})^{-1}\Ahat D^k$, and $\kappa_E^k$ is the condition number of $E^k$.
\end{theorem}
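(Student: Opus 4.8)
The plan is to expose the factored structure of $\Mhat^k$, to solve a reformulation whose conditioning is governed by $\kappa_E^k$ rather than $(\kappa_E^k)^2$, and then to convert the solver's accuracy guarantee into the required residual bound. First I would record the identity $\Mhat^k = E^k (E^k)^T$, which is immediate from $\Mhat^k=(D^k_{\Bhat})^{-1}\Ahat(D^k)^2\Ahat^T(D^k_{\Bhat})^{-1}$ together with $E^k=(D^k_{\Bhat})^{-1}\Ahat D^k$. Consequently the singular values of $E^k$ are the square roots of the eigenvalues of $\Mhat^k$, so $\kappa(\Mhat^k)=(\kappa_E^k)^2$ and $\|\Mhat^k\|\le\|E^k\|_F^2$. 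Applying a QLSA directly to the $m\times m$ system $\Mhat^k z^k=\hat{\sigma}^k$ would therefore pay a factor $(\kappa_E^k)^2$, which I want to avoid.

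To keep only a single power of $\kappa_E^k$, I would not solve $\Mhat^k z^k=\hat{\sigma}^k$ directly but instead solve the symmetric saddle-point embedding
\begin{equation*}
\begin{bmatrix} -I & (E^k)^T \\ E^k & 0 \end{bmatrix}\begin{bmatrix} w \\ z^k \end{bmatrix} = \begin{bmatrix} 0 \\ \hat{\sigma}^k \end{bmatrix},
\end{equation*}
whose lower block reproduces $\Mhat^k z^k=\hat{\sigma}^k$, i.e. the MNES solution, while after suitably scaling the identity block (by a factor of order $\sigma_{\min}(E^k)$) the embedded matrix has condition number $\Theta(\kappa_E^k)$ and Frobenius norm $\Ocal(\|E^k\|_F)$. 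Equivalently, one may apply quantum singular value transformation to a block-encoding of $E^k$ to realize $(\Mhat^k)^{-1}$ through a polynomial of degree $\Tilde{\Ocal}(\kappa_E^k)$ in the singular values of $E^k$. I expect this to be the main obstacle: proving rigorously that the reformulated operator is conditioned as $\Theta(\kappa_E^k)$ rather than $(\kappa_E^k)^2$ under the chosen scaling, and that the solver's accuracy guarantee transfers to the extracted $z^k$-block of the larger system without loss.

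With the embedded system in hand, I would invoke the solver of \cite{mohammadisiahroudi2023Accurately}, whose cost is $\Tilde{\Ocal}_{d,\kappa_M,\frac{\|\sigma\|}{\epsilon}}(d\kappa_M\|M\|_F)$ quantum operations with $\Tilde{\Ocal}_{\frac{\|\sigma\|}{\epsilon}}(d^2)$ classical operations, reading out only the $m$-dimensional block $z^k$ with the QTA. Substituting $d=m$, $\kappa_M=\Theta(\kappa_E^k)$, $\|M\|_F=\Ocal(\|E^k\|_F)$, and $\|\sigma\|=\|\hat{\sigma}^k\|$ gives the stated quantum complexity $\Tilde{\Ocal}_{m,\kappa_E^k,\frac{\|\hat{\sigma}^k\|}{\mu^k}}(m\kappa_E^k\|E^k\|_F)$. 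The classical $\Tilde{\Ocal}_{\frac{\|\hat{\sigma}^k\|}{\mu^k}}(mn)$ bound then accounts for building the system, i.e. forming $E^k$ and $\hat{\sigma}^k$ by two-sided diagonal rescalings of the preprocessed data $(\Ahat,\bhat)$, together with the solver's classical refinement loop and the $\Ocal(n)$-per-component recovery Steps 2--5.

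Finally I would translate output accuracy into the residual bound of Step 1. The solver returns $\tilde{z}^k$ with $\|\tilde{z}^k-z^k\|\le\epsilon$, hence $\|\rhat^k\|=\|\Mhat^k(\tilde{z}^k-z^k)\|\le\|\Mhat^k\|\,\epsilon$; choosing $\epsilon=\frac{\eta}{\sqrt{1+\theta}}\sqrt{\mu^k}\,/\,\|\Mhat^k\|$ yields $\|\rhat^k\|\le\frac{\eta}{\sqrt{1+\theta}}\sqrt{\mu^k}$, exactly the precision required to produce $\tilde{z}^k$ in Step 1. Because \cite{mohammadisiahroudi2023Accurately} relies on iterative refinement, the resulting $1/\epsilon$ dependence (and the hidden $\|\Mhat^k\|$ and $\kappa_E^k$ factors) enters only polylogarithmically and is absorbed into the subscript $\frac{\|\hat{\sigma}^k\|}{\mu^k}$; verifying this bookkeeping and the attendant norm relations is routine once the conditioning claim of the second step is established.
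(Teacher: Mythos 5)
Your first step and your last step coincide with the paper's proof: the paper also rewrites \eqref{eq: modified normal equation} as $E^k(E^k)^T\tilde{z}^k=\hat{\sigma}^k$ with $E^k=(D_{\Bhat}^{k})^{-1}\Ahat D^k$ (built classically in $\Ocal(mn)$ operations), and it also absorbs the precision bookkeeping into the iterative-refinement solver of \cite{mohammadisiahroudi2023Accurately} so that $\frac{\|\hat{\sigma}^k\|}{\mu^k}$ appears only polylogarithmically. Where you diverge is the mechanism for getting a single power of $\kappa_E^k$: the paper simply invokes the procedure of \cite{Blockencoding}, which solves systems of the form $E(E)^Tz=\sigma$ \emph{directly} from a block-encoding of $E$ (in effect your parenthetical ``equivalently, apply QSVT to a block-encoding of $E^k$'' remark), so the quantum state produced and read out by tomography is genuinely $m$-dimensional. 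Your primary route instead embeds the problem into the $(n+m)\times(n+m)$ saddle-point system, and this is where the argument breaks.

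Concretely, two things go wrong with the embedding. First, the dimension substitution $d=m$ is unjustified: the solver of \cite{mohammadisiahroudi2023Accurately} is being run on a system of dimension $n+m$, so its tomography cost carries the factor $n+m$ rather than $m$, and its classical cost $\tilde{\Ocal}(d^2)=\tilde{\Ocal}((n+m)^2)$ already exceeds the claimed $\tilde{\Ocal}(mn)$ when $m\ll n$ --- precisely the regime this theorem is designed for. Second, ``reading out only the $m$-dimensional block'' is not free: one must post-select on the block register, and with your scaling $\alpha\approx\sigma_{\min}(E^k)$ the companion block satisfies $w=(E^k)^T z/\alpha$, so $\|w\|/\|z\|$ can be as large as $\kappa_E^k$ in the worst case (take $\hat{\sigma}^k$ aligned with the top singular direction of $E^k$). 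The $z$-block then carries as little as a $\sim 1/\kappa_E^k$ fraction of the state's amplitude, and recovering it costs an extra factor of order $\kappa_E^k$ even with amplitude amplification --- which reinstates the quadratic condition-number dependence the embedding was meant to avoid. (A smaller issue: the Frobenius norm of the embedded matrix includes a $\sqrt{n}\,\alpha$ contribution from the scaled identity block, so $\Ocal(\|E^k\|_F)$ is also not guaranteed.) To repair the proof, drop the saddle-point system and promote your QSVT remark to the main argument, citing the normal-equation/regression result of \cite{Blockencoding} so that the solved system stays $m$-dimensional with cost $\tilde{\Ocal}(\mathrm{polylog}(\tfrac{n}{\epsilon})\,\kappa_E^k\|E^k\|_F)$ per solve; the rest of your accounting then goes through as in the paper.
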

\begin{proof}
Building the \eqref{eq: modified normal equation} system in a classical computer requires matrix multiplications, which cost $\Ocal(m^2n)$ arithmetic operations.
We can write \eqref{eq: modified normal equation} as $E^k(E^k)^T \tilde{z}{}^k=\hat{\sigma}^k$, where $E^k=(D_{\Bhat}^{k})^{-1}\Ahat D^k$. 
Calculating $E^k$ and $\hat{\sigma}^k$ requires just $\Ocal(mn)$ arithmetic operations.
The authors in \cite{Blockencoding} proposed an efficient procedure to build and solve a linear system of the form $E^k(E^k)^T \tilde{z}{}^k=\hat{\sigma}^k$, with $\tilde{\Ocal}(\text{polylog}(\frac{n}{\epsilon})\kappa_E^k\|E^k\|_F)$ complexity.
Then, we can use the quantum linear system solver of \cite{mohammadisiahroudi2023Accurately} with 
$\Tilde{\Ocal}_{m,\kappa_E^k, \frac{\|\hat{\sigma}^k\|}{\mu^k} }\left(m\kappa_E^k \|E^k\|_F \right)$ complexity and $\Tilde{\Ocal}_{ \frac{\|\hat{\sigma}^k\|}{\mu^k} }\left(mn \right)$ classical arithmetic operations.
\end{proof}
In the next section, we apply the proposed modification of NES to develop an IF-QIPM.

\section{Inexact-Feasible Quantum 
 Interior Point Method}\label{sec: IF-QIPM}
Before developing the algorithm, first we define the central path as
\begin{equation*}
    \mathcal{CP}=\Big\{(x,y,s)\in\mathcal{PD}\ \big| \ x_is_i=\mu\  \text{ for }\ i\in\{1,\dots,n\}\Big\},
\end{equation*}
where $\mu=\frac{x^Ts}{n}$. For any $\theta\in[0,1)$, a small neighborhood of the central path is defined as 
\begin{align*}
    \mathcal{N}(\theta)=\Big\{(x,y,s)\in\mathcal{PD}^0\ \big| \|XSe-\mu e\|_2\leq \theta \mu \Big\}.
\end{align*}
Now, we develop the IF-QIPM using the modified NES with the short-step version of IPMs.
\begin{algorithm}[h]
\caption{Short-step IF-QIPM using QLSA} \label{alg: IF-QIPM}
\begin{algorithmic}[1]
 \STATE Choose $\zeta >0$, $\eta=0.1$, $\theta=0.7$and $\beta=(1-\frac{0.2}{\sqrt{n}})$. 
 \STATE $k \gets 0$
 \STATE Choose initial feasible interior solution $(x^0, y^0, s^0)\in \mathcal{N}(\theta)$ 
\WHILE {$(x^k,y^k,s^k)\notin \mathcal{PD}_\zeta$}
\STATE $\mu^k \gets \frac{(x^k)^Ts^k}{n}$
\STATE $\tilde{z}^k \gets$ Solve $\Mhat^k\tilde{z}^k=\hat{\sigma}^k$ using IR+QLSA+QTA of \cite{mohammadisiahroudi2023Accurately} with $\|\rhat^k\|\leq\frac{\eta}{\sqrt{1+\theta}}\sqrt{\mu^k}$.
    \STATE $\Delta y{}^k \gets ((D_{\Bhat}^{k})^{-1}A_{\Bhat}^{-1})^T\tilde{z}^k$
    \STATE $v^k\gets(v^k_{\Bhat},v^k_{\Nhat})=(D_{\Bhat}^{k}\rhat^k,0)$
    \STATE $\Delta s{}^k\gets c-A^Ty^k-s^k-A^T\Delta y{}^k$.
    \STATE $\Delta x{}^k\gets \beta_1\mu^k (S^{k})^{-1}e-x^k-(D^k)^2\Delta s{}^k-v^k$.
\STATE $(x^{k+1},y^{k+1},s^{k+1}) \gets (x^k,y^k,s^k)+(\Delta x^k,\Delta y^k,\Delta s^k)$
\STATE $k \gets k+1$
\ENDWHILE
\STATE Return $(x^k,y^k,s^k)$
\end{algorithmic}
\end{algorithm}
In the next section, we prove the convergence of the IF-QIPM and analyze its complexity.

\subsection{Convergence Analysis}
To prove the convergence of IF-QIPM, we use the analysis of IF-QIPM in \cite{IF-QIPMforLO}. The only difference is the choice of the Newton system: In \cite{IF-QIPMforLO}, the authors use OSS and in the current work we propose the use of MNES, however both compute $(\Delta x, \Delta y, \Delta s)$ such that
\begin{align*}
    A\Delta x&=0,\\
    A^T \Delta y +\Delta s&=0,\\
    X\Delta s+S\Delta x&=\beta_1 \mu e-X^ks^k+r',
\end{align*}
where $\|r'\|\leq \frac{\eta}{\sqrt{1+\theta}} \sqrt{\mu}$.

Next we provide relevant theory for our IF-QIPM in lemmas \ref{lemma: orthogonality and mu} and \ref{lemma: ramaining in the neighbor} and theorem \ref{theo: iteration bound}.  For the relevant proofs, we refer to lemmas 3.1 and 3.2 and theorem 2.6, respectively, in \cite{IF-QIPMforLO}.
\begin{lemma}\label{lemma: orthogonality and mu}
Let the step $(\Delta x,\Delta y, \Delta s)$ be calculated from \eqref{eq: modified normal equation} in each iteration of the IF-IPM. Then
\begin{align*}
    \Delta x^T\Delta s&=0,\\
    (x+\Delta x)^T(s+\Delta s)&\leq (\beta+\frac{\eta}{\sqrt{1+\theta}})x^Ts,\\
    (x+\Delta x)^T(s+\Delta s)&\geq (\beta-\frac{\eta}{\sqrt{1+\theta}})x^Ts. 
\end{align*} 
\end{lemma}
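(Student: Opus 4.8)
The plan is to reduce everything to the three equations satisfied by the inexact step, already established in Lemma~\ref{lem: newton form}: $A\Delta x = 0$, $A^T\Delta y + \Delta s = 0$, and $X\Delta s + S\Delta x = \beta \mu e - Xs + r'$, with $\|r'\|$ controlled by the residual bound. I would treat the three claims in turn, since the first feeds directly into the other two.

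First I would establish the orthogonality $\Delta x^T\Delta s = 0$. This is a pure linear-algebra fact that does not touch the right-hand side: from $\Delta s = -A^T\Delta y$ (the second equation) we get $\Delta x^T\Delta s = -\Delta x^T A^T \Delta y = -(A\Delta x)^T\Delta y$, which vanishes by the first equation $A\Delta x = 0$. Equivalently, $\Delta x \in \ker A$ while $\Delta s \in \mathrm{range}(A^T)$, and these are orthogonal complements.

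Next, for the duality-gap bounds, I would expand $(x+\Delta x)^T(s+\Delta s) = x^Ts + x^T\Delta s + s^T\Delta x + \Delta x^T\Delta s$ and drop the last term using the orthogonality just proved. To evaluate $x^T\Delta s + s^T\Delta x$, I would left-multiply the third Newton equation by $e^T$: since $e^TX\Delta s = x^T\Delta s$, $e^TS\Delta x = s^T\Delta x$, $e^T(\beta\mu e) = \beta n\mu = \beta\, x^Ts$, and $e^TXs = x^Ts$, this gives $x^T\Delta s + s^T\Delta x = (\beta - 1)x^Ts + e^Tr'$. Substituting back yields the clean identity $(x+\Delta x)^T(s+\Delta s) = \beta\, x^Ts + e^Tr'$, so that both claimed inequalities reduce to a single two-sided bound $|e^Tr'| \le \frac{\eta}{\sqrt{1+\theta}}\, x^Ts$.

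Finally, I would bound the perturbation term $e^Tr'$ by Cauchy--Schwarz, $|e^Tr'| \le \sqrt{n}\,\|r'\|$, and combine this with the residual bound and the relation $x^Ts = n\mu$. I expect this last step to be the main obstacle: it is precisely where the constant $\frac{\eta}{\sqrt{1+\theta}}$ must be made to come out exactly, and the outcome is sensitive to bookkeeping. For instance, feeding in the $\ell_2$ bound $\|r'\| \le \frac{\eta}{\sqrt{1+\theta}}\sqrt{\mu}$ produces $|e^Tr'| \le \frac{\eta}{\sqrt{1+\theta}}\sqrt{x^Ts}$, which only yields the target once $x^Ts \ge 1$; whereas routing through the $\ell_\infty$ bound $\|r'\|_\infty \le \eta\mu$ of Lemma~\ref{lem: residual bound} via $|e^Tr'| \le \|e\|_1\|r'\|_\infty = n\eta\mu = \eta\, x^Ts$ lands the looser constant $\eta$ rather than $\frac{\eta}{\sqrt{1+\theta}}$. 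Reconciling which norm and which Hölder/Cauchy--Schwarz inequality delivers exactly the advertised constant (and carefully tracking the $\mu$ versus $n\mu = x^Ts$ scaling) is the delicate part; the orthogonality and the gap identity of the first two steps are otherwise mechanical.
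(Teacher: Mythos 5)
Your skeleton is the right one --- indeed it is essentially the argument behind Lemma 3.1 of \cite{IF-QIPMforLO}, to which the paper defers this proof rather than reproving it: orthogonality from $\Delta x\in\ker A$ and $\Delta s=-A^T\Delta y$, then multiplying the third Newton equation by $e^T$ to obtain the identity $(x+\Delta x)^T(s+\Delta s)=\beta\,x^Ts+e^Tr'$. The genuine gap is that you stop at exactly the point where the proof must be completed: you never establish $|e^Tr'|\le\frac{\eta}{\sqrt{1+\theta}}\,x^Ts$, and both candidate routes you list fail because they mix up which vector carries which bound. The hypothesis supplied by the MNES procedure (Step~1) is a bound on the \emph{MNES residual} $\rhat$, namely $\|\rhat\|\le\frac{\eta}{\sqrt{1+\theta}}\sqrt{\mu}$; the perturbation entering the Newton system is $r'=-Sv$ with $v_{\Bhat}=D_{\Bhat}\rhat$ and $v_{\Nhat}=0$, i.e., componentwise $r'_i=-\sqrt{x_is_i}\,\rhat_i$ for $i\in\Bhat$ and $r'_i=0$ otherwise. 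So $r'$ scales like $\mu$, not like $\sqrt{\mu}$ (this is precisely the content of Lemma~\ref{lem: residual bound}); the line ``$\|r'\|\le \frac{\eta}{\sqrt{1+\theta}}\sqrt{\mu}$'' in the paper's convergence section conflates $r'$ with $\rhat$, and taking it at face value is what produced your spurious requirement $x^Ts\ge 1$.

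The missing step: apply Cauchy--Schwarz to the pairing of $(\sqrt{x_is_i})_{i\in\Bhat}$ with $\rhat$, not to the pairing of $e$ with $r'$. Since $x,s>0$,
\begin{equation*}
  |e^Tr'| \;=\; \Bigl|\sum_{i\in\Bhat}\sqrt{x_is_i}\,\rhat_i\Bigr|
  \;\le\; \Bigl(\sum_{i\in\Bhat}x_is_i\Bigr)^{1/2}\,\|\rhat\|
  \;\le\; \sqrt{x^Ts}\;\frac{\eta}{\sqrt{1+\theta}}\sqrt{\mu}
  \;=\; \frac{\eta}{\sqrt{1+\theta}}\,\frac{x^Ts}{\sqrt{n}}
  \;\le\; \frac{\eta}{\sqrt{1+\theta}}\,x^Ts,
\end{equation*}
where the equality uses $\mu=x^Ts/n$. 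The two factors of $\sqrt{\mu}$ combine into $\mu$, so no condition such as $x^Ts\ge 1$ is needed, and the advertised constant comes out with a factor $1/\sqrt{n}$ to spare. (Equivalently, you can run your second route in the $\ell_2$ norm: the proof of Lemma~\ref{lem: residual bound} works verbatim with $\|\cdot\|_2$ because $S_{\Bhat}D_{\Bhat}=(X_{\Bhat}S_{\Bhat})^{1/2}$ is diagonal, giving $\|r'\|\le\eta\mu$; then $|e^Tr'|\le\sqrt{n}\,\|r'\|\le\sqrt{n}\,\eta\mu=\frac{\eta}{\sqrt{n}}\,x^Ts\le\frac{\eta}{\sqrt{1+\theta}}\,x^Ts$, since $n\ge 2>1+\theta$. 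Only your $\ell_1$/$\ell_\infty$ H\"older pairing is too lossy, as it lands the constant $\eta>\frac{\eta}{\sqrt{1+\theta}}$.) Substituting this two-sided bound into your identity yields both inequalities of the lemma.
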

Now, we can show that the iterates of IF-QIPM remain in the neighborhood of the central path in Lemma~\ref{lemma: ramaining in the neighbor}, by using results of Lemma~\ref{lemma: orthogonality and mu}. 
\begin{lemma} \label{lemma: ramaining in the neighbor}
Let $(x^0,s^0,y^0)\in \mathcal{N}(\theta)$ for a given $\theta\in[0,1)$, then 
$(x^k,s^k,y^k)\in \mathcal{N}(\theta)$ for any $k\in \mathbb{N}$.
\end{lemma}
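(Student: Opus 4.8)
The plan is to argue by induction on $k$, the base case $k=0$ being exactly the hypothesis $(x^0,y^0,s^0)\in\mathcal{N}(\theta)$. Assuming $(x^k,y^k,s^k)\in\mathcal{N}(\theta)$, I must show that the full Newton step taken in Algorithm~\ref{alg: IF-QIPM} returns the next iterate to $\mathcal{N}(\theta)$. Membership $(x^{k+1},y^{k+1},s^{k+1})\in\mathcal{N}(\theta)$ requires three things: equality feasibility, the centrality bound $\|X^{k+1}S^{k+1}e-\mu^{k+1}e\|_2\le\theta\mu^{k+1}$, and strict positivity $(x^{k+1},s^{k+1})>0$. Feasibility is immediate from Lemma~\ref{lem: newton form}: since $A\widetilde{\Delta x}{}^k=0$ and $A^T\widetilde{\Delta y}{}^k+\widetilde{\Delta s}{}^k=0$, the full step preserves $Ax^{k+1}=b$ and $A^Ty^{k+1}+s^{k+1}=c$ exactly, so only centrality and positivity remain.

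I would then compute the complementarity products at the new point. Reading the third line of \eqref{eq:approximate modified newton system} componentwise and expanding $x^{k+1}_is^{k+1}_i=(x^k_i+\widetilde{\Delta x}{}^k_i)(s^k_i+\widetilde{\Delta s}{}^k_i)$, the first-order terms cancel and leave
\begin{equation*}
x^{k+1}_is^{k+1}_i=\beta_1\mu^k+{r'}^k_i+\widetilde{\Delta x}{}^k_i\widetilde{\Delta s}{}^k_i.
\end{equation*}
Summing over $i$ and using $\widetilde{\Delta x}{}^{k\,T}\widetilde{\Delta s}{}^k=0$ from Lemma~\ref{lemma: orthogonality and mu} yields a closed expression for $\mu^{k+1}$, while the two-sided gap bound of that same lemma gives $(\beta-\tfrac{\eta}{\sqrt{1+\theta}})\mu^k\le\mu^{k+1}\le(\beta+\tfrac{\eta}{\sqrt{1+\theta}})\mu^k$; in particular $\mu^{k+1}>0$.

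The crux is the centrality bound. Using the displayed identity for $x^{k+1}_is^{k+1}_i$, I would split $X^{k+1}S^{k+1}e-\mu^{k+1}e$ into a contribution controlled by the residual bound on ${r'}^k$ from Lemma~\ref{lem: residual bound} and a quadratic part whose $i$-th entry is $\widetilde{\Delta x}{}^k_i\widetilde{\Delta s}{}^k_i$. Bounding the Euclidean norm of this quadratic part is the main obstacle: it relies on the classical short-step estimate that, for iterates in $\mathcal{N}(\theta)$, this cross term is $\Ocal(\mu^k)$ with a small constant, and it is precisely here that the parameter choices $\eta=0.1$, $\theta=0.7$, and $\beta=1-\tfrac{0.2}{\sqrt{n}}$ are consumed to push the total below $\theta\mu^{k+1}$. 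Rather than reprove this, I would observe that the step produced by the MNES procedure satisfies the \emph{same} linear system and residual bound as the OSS-based step of \cite{IF-QIPMforLO}, as recorded at the start of this subsection, so the estimate transfers verbatim and the conclusion follows from Lemma~3.2 of \cite{IF-QIPMforLO}.

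Finally, strict positivity $(x^{k+1},s^{k+1})>0$ follows by the standard continuity argument along the segment $t\mapsto(x^k,s^k)+t(\widetilde{\Delta x}{}^k,\widetilde{\Delta s}{}^k)$ for $t\in[0,1]$: the centrality inequality with $\theta<1$ forbids any product $x_is_i$ from vanishing, so by continuity no coordinate of $x$ or $s$ can reach zero, and since $(x^k,y^k,s^k)\in\mathcal{PD}^0$ the endpoint lies in $\mathcal{PD}^0$ as well. Combining feasibility, centrality, and positivity closes the induction.
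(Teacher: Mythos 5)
Your proposal is correct and takes essentially the same route as the paper: the paper proves this lemma precisely by noting that the MNES-based step satisfies the same perturbed Newton system with the same residual bound as the OSS-based step, and then deferring to Lemma~3.2 of \cite{IF-QIPMforLO}, which is exactly the reduction you make at the crux (the bound on the quadratic term $\widetilde{\Delta x}{}^k_i\widetilde{\Delta s}{}^k_i$). The extra scaffolding you supply --- the induction skeleton, feasibility via Lemma~\ref{lem: newton form}, the complementarity expansion using Lemma~\ref{lemma: orthogonality and mu}, and the continuity argument for positivity --- is consistent with the cited analysis and does not change the approach.
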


Based on Lemma~\ref{lemma: ramaining in the neighbor}, IF-IPM remains in the neighborhood of the central path, and it converges to the optimal solution if $\mu^k$ converges to zero. In Theorem~\ref{theo: iteration bound}, we prove that the algorithm reaches $\zeta$-optimal solution after $\Ocal(\sqrt{n}\log(\frac{{\mu}_0}{\zeta}))$  iteration.

\begin{theorem}\label{theo: iteration bound}
The sequence $\mu^k$ converges to zero linearly, and we have $\mu^k\leq \zeta$ after
$\Ocal(\sqrt{n}\log(\frac{{\mu}_0}{\zeta}))$ 
iterations.
\end{theorem}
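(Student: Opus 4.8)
The plan is to convert the one-step estimate of Lemma~\ref{lemma: orthogonality and mu} into a geometric decay of the normalized duality gap $\mu^k = \frac{(x^k)^T s^k}{n}$ and then count how many steps drive it below $\zeta$. First I would observe that, by construction of the update in Algorithm~\ref{alg: IF-QIPM}, $\mu^{k+1} = \frac{(x^k + \Delta x^k)^T(s^k + \Delta s^k)}{n}$, so dividing the upper bound of Lemma~\ref{lemma: orthogonality and mu} by $n$ yields a recursion $\mu^{k+1} \le \gamma\,\mu^k$, where the factor $\gamma$ is assembled from $\beta$ together with the inexactness (residual) term. It is essential here to invoke Lemma~\ref{lemma: ramaining in the neighbor}: because every iterate stays in $\mathcal{N}(\theta)$, the hypotheses of Lemma~\ref{lemma: orthogonality and mu} are met at each $k$, so the recursion holds for all iterations and not merely the first.

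With the recursion in hand, I would unroll it to $\mu^k \le \gamma^k \mu^0$. Since $\gamma < 1$ is a constant for fixed $n$, this already establishes the first claim that $\mu^k$ converges to zero linearly. For the iteration bound I would require $\gamma^k \mu^0 \le \zeta$, take logarithms, and use $\log(1-t) \le -t$ for $t \in [0,1)$: writing $\gamma = 1 - \frac{\delta}{\sqrt{n}}$ gives $\log(1/\gamma) \ge \frac{\delta}{\sqrt{n}}$, so any $k \ge \frac{\sqrt{n}}{\delta}\log\big(\frac{\mu^0}{\zeta}\big)$ suffices, i.e. $\Ocal\big(\sqrt{n}\log(\frac{\mu^0}{\zeta})\big)$ iterations. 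Finally I would close the loop by noting that $\mu^k \le \zeta$ is exactly the membership condition $(x^k,y^k,s^k) \in \mathcal{PD}_\zeta$ (feasibility being guaranteed throughout by Lemma~\ref{lemma: ramaining in the neighbor}), which is the termination test of the while-loop; hence the algorithm halts with a $\zeta$-optimal solution within the stated count.

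I expect the main obstacle to be the quantitative heart of the first step: verifying that, under the choices $\beta = 1 - \frac{0.2}{\sqrt{n}}$, $\eta = 0.1$, and $\theta = 0.7$, the factor $\gamma$ is a genuine contraction of the form $1 - \Theta(1/\sqrt{n})$. The subtlety is that the inexactness of the MNES solve enters through the residual $r'$, and one must check that its contribution to the \emph{normalized} gap—controlled by the tolerance $\|r'\|_\infty \le \eta\mu^k$ of Lemma~\ref{lem: residual bound} together with the factor $\|e\| = \sqrt{n}$ relating $e^T r'$ to $\|r'\|$—is strictly dominated by the $\frac{0.2}{\sqrt{n}}$ reduction supplied by $\beta < 1$, so that the net gain from the residual scales as $\frac{\eta}{\sqrt{n}}$ rather than as a constant. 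Once this balance is confirmed, $\gamma < 1$ with the correct order, and the remaining telescoping-and-logarithm argument is the routine one shared with all short-step IPMs.
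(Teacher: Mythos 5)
Your skeleton coincides with the proof the paper intends: the paper does not prove Theorem~\ref{theo: iteration bound} itself, but defers to Lemmas 3.1, 3.2 and Theorem 2.6 of \cite{IF-QIPMforLO}, and that argument is exactly your plan --- orthogonality plus the summed centering equation give a one-step factor for the duality gap, Lemma~\ref{lemma: ramaining in the neighbor} guarantees the factor applies at every iterate, and unrolling together with $\log(1-t)\le -t$ yields the $\Ocal(\sqrt{n}\log(\mu^0/\zeta))$ count and the identification of $\mu^k\le\zeta$ with the stopping test.

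There is, however, a genuine gap in the mechanism you cite for the ``quantitative heart,'' and it is not mere bookkeeping. Summing the third equation of \eqref{eq:approximate modified newton system} and using $\Delta x^T\Delta s=0$ gives $(x+\Delta x)^T(s+\Delta s)=\beta x^Ts+e^Tr'$. You propose to control $e^Tr'$ via the $\ell_\infty$ tolerance $\|r'\|_\infty\le\eta\mu$ of Lemma~\ref{lem: residual bound} ``together with the factor $\|e\|=\sqrt{n}$,'' but that chain does not close: from $\ell_\infty$ control alone the best available bound is $|e^Tr'|\le\|e\|_1\|r'\|_\infty= n\eta\mu$ (equivalently, $|e^Tr'|\le\|e\|_2\|r'\|_2\le\sqrt{n}\cdot\sqrt{n}\,\|r'\|_\infty$), i.e.\ a \emph{constant-order} perturbation $\mu^{k+1}\le(\beta+\eta)\mu^k$. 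With $\beta=1-\tfrac{0.2}{\sqrt{n}}$ and $\eta=0.1$ this factor exceeds $1$ for all $n\ge 5$, so no contraction and no linear convergence follows. What you actually need is the $\ell_2$ bound $\|r'\|\le\eta\mu$, from which $|e^Tr'|\le\sqrt{n}\,\eta\mu=\tfrac{\eta}{\sqrt{n}}x^Ts$ and hence $\mu^{k+1}\le\bigl(1-\tfrac{0.1}{\sqrt{n}}\bigr)\mu^k$, exactly the $\Theta(1/\sqrt{n})$ contraction you anticipate. This bound is available, but you must invoke the $2$-norm version of the residual lemma: line 6 of Algorithm~\ref{alg: IF-QIPM} enforces $\|\rhat^k\|\le\tfrac{\eta}{\sqrt{1+\theta}}\sqrt{\mu^k}$ in the $2$-norm, and the proof of Lemma~\ref{lem: residual bound} runs verbatim there, since $\|S_{\Bhat}D_{\Bhat}\rhat\|\le\max_i\sqrt{x_is_i}\,\|\rhat\|\le\sqrt{(1+\theta)\mu}\,\|\rhat\|\le\eta\mu$. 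The same $\ell_\infty$-versus-$\ell_2$ distinction explains why the upper bound printed in Lemma~\ref{lemma: orthogonality and mu}, namely $(\beta+\tfrac{\eta}{\sqrt{1+\theta}})x^Ts$, cannot be taken at face value either (it too is a constant offset of $\beta$ and would contradict the theorem); the parameter conditions \eqref{eq: param con1}--\eqref{eq: param con2}, which carry $\eta/\sqrt{n}$, confirm that the intended factor is $\beta\pm\tfrac{\eta}{\sqrt{n}}$. So: right route, but replace the $\ell_\infty$ tolerance by its $\ell_2$ counterpart before assembling $\gamma$, otherwise the step you flagged as the main obstacle genuinely fails.
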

%
%
This demonstrates that the IF-IPM achieves the best-known iteration complexity, and the proof holds for any values satisfying the following two conditions.
\begin{align}
    \beta&\leq(1-\frac{\eta+0.01}{\sqrt{n}}),\label{eq: param con1}\\
    \frac{\theta^2 +n(1-\beta)^2+\eta^2}{2^{3/2}(1-\theta)}+ \eta &\leq \theta(\beta-\frac{\eta}{\sqrt{n}}).\label{eq: param con2}
\end{align}
It is not hard to check that $\theta=0.7$ and $\eta=0.1$ satisfy these conditions.

\subsection{Complexity}
Let $L$ be the binary length of input data defined as 
$$L=mn+m+n+\sum_{i,j}\lceil\log(|a_{ij}|+1)\rceil+\sum_{i}\lceil\log(|c_{i}|+1)\rceil+\sum_{j}\lceil\log(|b_{j}|+1)\rceil.$$
An exact solution can be calculated by rounding \cite{Wright1997_Primal}, provided that we terminate with $\mu^k\leq2^{\Ocal(L)}.$
Accordingly, the IF-IPM may require $\Ocal(\sqrt{n}L)$ to determine an exact optimal solution; for more details see \cite[Chapter 3]{Wright1997_Primal}. The next theorem characterizes the total time complexity of the proposed IF-QIPM.
\begin{theorem}\label{theo: final complexity}
The proposed IF-QIPM of Algorithm~\ref{alg: IF-QIPM} determines a $\zeta$-optimal solution using at most $$\tilde{\Ocal}_{m,\kappa_{\Ahat}, \|\Ahat\|,\|\bhat\|,\mu^0, \frac{1}{\zeta}}\left(\sqrt{n}m^{1.5}\kappa_{\Ahat}\|\Ahat\|\frac{\omega^2}{\zeta^3}\right)$$
queries to the QRAM and $\tilde{\Ocal}_{\mu^0, \frac{1}{\zeta}}\left(n^{1.5}m\right)$ classical arithmetic operations.
\end{theorem}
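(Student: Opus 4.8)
The plan is to obtain the total cost as the product of the iteration count from Theorem~\ref{theo: iteration bound} and the per-iteration cost of solving the modified system from Theorem~\ref{theo: QLSA solution}, and then to replace the iterate-dependent quantities $\kappa_E^k$, $\|E^k\|_F$, and $\|\hat\sigma^k\|/\mu^k$ by bounds depending only on the input data $(\Ahat,\bhat,c)$, the neighborhood parameter, and $\zeta$. By Theorem~\ref{theo: iteration bound} the algorithm performs $\Ocal(\sqrt{n}\log(\mu^0/\zeta))$ iterations, and by Theorem~\ref{theo: QLSA solution} each iteration uses $\tilde{\Ocal}(m\kappa_E^k\|E^k\|_F)$ QRAM queries together with $\tilde{\Ocal}(mn)$ classical arithmetic operations. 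Since $\log(\mu^0/\zeta)$ is polylogarithmic it is absorbed into the $\tilde{\Ocal}$ subscripts, so the arithmetic count is immediately $\tilde{\Ocal}_{\mu^0,1/\zeta}(\sqrt{n}\cdot mn)=\tilde{\Ocal}_{\mu^0,1/\zeta}(n^{1.5}m)$, matching the claimed classical bound; it then remains only to bound the quantum cost uniformly over $k$.

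For the quantum part I would bound $\|E^k\|_F$ and $\kappa_E^k$ using three ingredients: (i) the iterates remain in $\mathcal{N}(\theta)$ by Lemma~\ref{lemma: ramaining in the neighbor}, so $(1-\theta)\mu^k\le x_is_i\le(1+\theta)\mu^k$ for all $i$; (ii) the norms $\|x^k\|,\|s^k\|$ are controlled in terms of $\omega$; and (iii) $\mu^k\ge\zeta$ holds until termination. Ingredients (i)--(ii) bound the diagonal entries of $D^k$ and $(D_{\Bhat}^k)^{-1}$, and thereby bound $\sigma_{\max}(E^k)$ and $\|E^k\|_F$ in terms of $\|\Ahat\|$, $\omega$, and $1/\mu^k$. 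The decisive structural point for the condition number is that $\Ahat=A_{\Bhat}^{-1}A$ carries an identity block on its basic columns, so after permutation $E^k=\big[\,I,\ (D_{\Bhat}^k)^{-1}\Ahat_{\Nhat}D_{\Nhat}^k\,\big]$ and $E^k(E^k)^T\succeq I$; this keeps $\sigma_{\min}(E^k)$ bounded away from $0$ and is precisely what lets the resulting condition number be expressed through $\kappa_{\Ahat}$ rather than degenerating as $\mu^k\to0$.

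Combining the lower bound on $\sigma_{\min}(E^k)$ coming from this identity-block structure with the upper bound on $\sigma_{\max}(E^k)$, and relating both to $\sigma_{\min}(\Ahat)$ and $\|\Ahat\|$ through the same scaling estimates, yields a bound on $\kappa_E^k$ in terms of $\kappa_{\Ahat}$, $\omega$, and $1/\mu^k$. Substituting $\mu^k\ge\zeta$ to turn each surviving $1/\mu^k$ into $1/\zeta$, inserting the uniform bounds on $\kappa_E^k$ and $\|E^k\|_F$ into $\sqrt{n}\cdot m\kappa_E^k\|E^k\|_F$, and folding the polylogarithmic dependence on $\|\hat\sigma^k\|/\mu^k$ (itself bounded via $\|\bhat\|$, $\omega$, and $\mu^k\ge\zeta$) into the subscripts then produces the stated query complexity $\tilde{\Ocal}_{m,\kappa_{\Ahat},\|\Ahat\|,\|\bhat\|,\mu^0,1/\zeta}\!\big(\sqrt{n}\,m^{1.5}\kappa_{\Ahat}\|\Ahat\|\,\omega^2/\zeta^3\big)$.

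The step I expect to be the main obstacle is obtaining $\kappa_E^k$ and $\|E^k\|_F$ sharply enough that the final exponents of $m$, $\omega$, and $1/\zeta$ come out as claimed: naive spectral-norm submultiplicativity overcounts the powers of $\omega$ and $m$, so one must exploit the identity-block structure for the lower bound on $\sigma_{\min}(E^k)$ and the precise coupling of $x^k$, $s^k$, and $\mu^k$ enforced by the neighborhood condition rather than bounding each scaling factor in isolation. Once these sharp uniform bounds are in hand, the remaining steps---carrying out the multiplication of the iteration count by the per-iteration cost and tracking which quantities are suppressed only polylogarithmically---are routine.
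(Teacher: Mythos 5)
Your overall skeleton coincides with the paper's: multiply the $\Ocal(\sqrt{n}\log(\mu^0/\zeta))$ iteration bound of Theorem~\ref{theo: iteration bound} by the per-iteration cost of Theorem~\ref{theo: QLSA solution}, replace the iterate-dependent quantities $\kappa_E^k$, $\|E^k\|_F$, and $\|\hat{\sigma}^k\|/\mu^k$ by bounds that are uniform until termination, and absorb the logarithmic factors into the $\tilde{\Ocal}$ subscripts; your classical count $\tilde{\Ocal}_{\mu^0,\frac{1}{\zeta}}(n^{1.5}m)$ is obtained exactly as in the paper. The essential difference is how the uniform bounds are obtained. The paper does not derive them at all: it imports $\|E^k\|_F\leq\sqrt{m}\|E^k\|=\Ocal\big(\frac{\sqrt{m}}{\zeta}\|\Ahat\|\big)$ and $\kappa_E^k=\Ocal\big(\frac{\omega^2}{\zeta^2}\kappa_{\Ahat}\big)$ from Theorem 4.2 of \cite{mohammadisiahroudi2022efficient}, after which the product $m\cdot\kappa_E^k\cdot\|E^k\|_F$ immediately gives the stated $m^{1.5}\kappa_{\Ahat}\|\Ahat\|\omega^2/\zeta^3$ per iteration. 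You instead propose to prove such bounds from scratch.

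This is where your proposal has a genuine gap. Your structural observation is correct and is indeed the right intuition: since $\Ahat_{\Bhat}=A_{\Bhat}^{-1}A_{\Bhat}=I$ and diagonal matrices commute, $E^k_{\Bhat}=(D_{\Bhat}^{k})^{-1}\Ahat_{\Bhat}D_{\Bhat}^{k}=I$, hence $E^k(E^k)^T=I+E^k_{\Nhat}(E^k_{\Nhat})^T\succeq I$ and $\sigma_{\min}(E^k)\geq 1$. But carrying this route out, the natural conclusion is $\kappa_E^k\leq\|E^k\|$, so $m\,\kappa_E^k\,\|E^k\|_F\leq m^{1.5}\|E^k\|^2$, and bounding $\|E^k\|\leq\|(D_{\Bhat}^{k})^{-1}\|\,\|\Ahat\|\,\|D^k\|$ via the neighborhood condition together with $\|x^k\|_\infty,\|s^k\|_\infty=\Ocal(\omega)$ and $\mu^k\geq\zeta$ yields $\|E^k\|=\Ocal(\|\Ahat\|\omega^2/\zeta)$, hence a total of order $\sqrt{n}\,m^{1.5}\|\Ahat\|^2\omega^4/\zeta^2$. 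That is a different (incomparable) expression from the claimed $\sqrt{n}\,m^{1.5}\kappa_{\Ahat}\|\Ahat\|\omega^2/\zeta^3$: the exponents of $\omega$ and $1/\zeta$ differ, and $\kappa_{\Ahat}$ is replaced by an extra factor of $\|\Ahat\|$. Reproducing the theorem as stated requires specifically the pair of bounds cited above, in which the factor $\omega^2/\zeta$ sits inside $\kappa_E^k$ while $\|E^k\|_F$ carries only $\sqrt{m}\|\Ahat\|/\zeta$; you explicitly flag obtaining such sharp bounds as ``the main obstacle'' but never resolve it, asserting that the neighborhood coupling will make the exponents come out right. As written, your argument would establish a complexity bound of a different form, not the stated one; to close the gap you must either prove the two bounds of \cite{mohammadisiahroudi2022efficient} (as the paper implicitly does by citation) or cite them directly.
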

\begin{proof}
The complexity analysis of the different parts of IF-QIPM \ref{alg: IF-QIPM} is outlined as follows:
\begin{itemize}
    \item After $\Ocal(\sqrt{n}\log(\frac{\mu^0}{\zeta}))$ the IF-QIPM obtains a $\zeta$-optimal solution. 
    \item In Theorem 4.2 of \citep{mohammadisiahroudi2022efficient}, the norm and condition number bounds of MNES are derived as $$\|\sigma^k\|=\Ocal\left(\frac{\|\Ahat\|+\|\bhat\|}{\zeta}\right), \quad \|E^k\|_F\leq\sqrt{m}\|E^k\|=\Ocal\left(\frac{\sqrt{m}}{\zeta}\|\Ahat\|\right),\quad\kappa^k_E=\Ocal\left(\frac{\omega^2}{\zeta^{2}}\kappa_{\Ahat}\right),$$
    where $\omega$ is a bound on $\|x^*,s^*\|_{\infty}$ for all $(x^*,y^*,s^*)\in \mathcal{PD}^*$.
    \item Applying Theorem~\ref{theo: QLSA solution}, the complexity of quantum subroutine to solve the MNES is 
    $$\tilde{\Ocal}_{m,\kappa_{\Ahat}, \|\Ahat\|,\|\bhat\|,\frac{1}{\zeta}}\left(m^{1.5}\kappa_{\Ahat}\|\Ahat\|\frac{\omega^2}{\zeta^3}\right).$$
    \item In each iteration of IF-QIPM, we need to build $E^k$ classically and load to QRAM with $\Ocal(mn)$ complexity. Also, some classical matrix products happen with $\Ocal(mn)$ cost. Thus, the classical cost per iteration is $\Ocal(mn)$. 
    \item Thus, in the worst case the IF-QIPM requires
$$\tilde{\Ocal}_{m,\kappa_{\Ahat}, \|\Ahat\|,\|\bhat\|,\mu^0, \frac{1}{\zeta}}\left(\sqrt{n}m^{1.5}\kappa_{\Ahat}\|\Ahat\|\frac{\omega^2}{\zeta^3}\right)$$
accesses to the QRAM and $\tilde{\Ocal}_{\mu^0, \frac{1}{\zeta}}\left(n^{1.5}m\right)$ classical arithmetic operations.
\end{itemize}
\end{proof}

\subsection{Improving the error dependence of the IF-QIPM}
To get an exact optimal solution, the time complexity contains the exponential term $2^L$. To address this problem, we can fix $\zeta=10^{-2}$ and improve the precision by iterative refinement in $\Ocal(L)$ iterations \cite{mohammadisiahroudi2022efficient}. The first iterative regiment method for linear optimization is proposed by Gleixner et al. \cite{iterative}. Here, we use
the iterative refinement of \cite{IF-QIPMforLO}, which is designed specifically for IF-QIPM as Algorithm\ref{alg:iterative refinement}. 
\begin{algorithm}[H]
\caption{IR-IF-QIPM of \cite{IF-QIPMforLO}} \label{alg:iterative refinement}
\begin{algorithmic}[1]
\REQUIRE \big($A \in \mathbb{R}^{m\times n}, b\in \mathbb{R}^{m}, c\in \mathbb{R}^{n},\zeta<\hat{\zeta}<1$ \big)
\STATE  $k \gets 1$ and $\nabla_0\gets1$
\STATE $({x}^1, {y}^1, {s}^1) \gets$ \textbf{solve} $(A,b,c)$ using IF-QIPM of Algorithm~\ref{alg: IF-QIPM} with $\hat{\zeta}$ precision 
\WHILE{$(x^k,y^k,s^k)\notin \mathcal{PD}_{\zeta}$}
\STATE $\nabla^k \gets \frac{1}{\big(x^k\big)^Ts^k}$
\STATE $(\hat{x}^k, \hat{y}^k, \hat{s}^k) \gets$ \textbf{solve} $(A,0,\nabla^ks^{k})$ using IF-QIPM of Algorithm~\ref{alg: IF-QIPM} with $\hat{\zeta}$ precision
\STATE $x^{k+1}\gets x^{k}+\frac{1}{\nabla^k} \hat{x}^k$ and $y^{k+1}\gets y^{k}+\frac{1}{\nabla^k}\hat{y}^k$
\ENDWHILE
\end{algorithmic}
\end{algorithm}
\begin{theorem}
    The total time complexity of finding exact optimal solution with IR-IF-QIPM is
$$\tilde{\Ocal}_{m,\kappa_{\Ahat}, \|\Ahat\|,\|\bhat\|,\mu^0}\left(\sqrt{n}m^{1.5}\kappa_{\Ahat}\|\Ahat\|\omega^2L\right)$$
with $\tilde{\Ocal}_{\mu^0}\left(n^{1.5}mL\right)$ classical arithmetic operation. 
\end{theorem}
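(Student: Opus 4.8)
The plan is to view Algorithm~\ref{alg:iterative refinement} as an outer loop that repeatedly calls Algorithm~\ref{alg: IF-QIPM} at a fixed constant target precision $\hat{\zeta}$, and to bound the total work as the product of the number of outer refinements and the cost of one inner IF-QIPM solve. The bound then follows by combining the refinement iteration count established for the scheme of \cite{IF-QIPMforLO} with the per-call estimate of Theorem~\ref{theo: final complexity}.

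First I would specialize Theorem~\ref{theo: final complexity} to the constant precision $\hat{\zeta}=\Ocal(1)$. Since the $1/\zeta^3$ factor then collapses to a constant, each inner call costs
$$\tilde{\Ocal}_{m,\kappa_{\Ahat}, \|\Ahat\|,\|\bhat\|,\mu^0}\left(\sqrt{n}m^{1.5}\kappa_{\Ahat}\|\Ahat\|\omega^2\right)$$
QRAM queries together with $\tilde{\Ocal}_{\mu^0}(n^{1.5}m)$ classical arithmetic operations. Next I would import the convergence guarantee of the refinement scheme from \cite{IF-QIPMforLO}: each refinement contracts the duality gap by the constant factor $\hat{\zeta}$, and by the rounding argument of \cite[Chapter 3]{Wright1997_Primal} an exact optimal solution is obtained once $\mu^k\le 2^{-\Ocal(L)}$, so the while loop terminates after $\Ocal(L)$ outer iterations. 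Multiplying the $\Ocal(L)$ iterations by the per-call cost yields
$$\tilde{\Ocal}_{m,\kappa_{\Ahat}, \|\Ahat\|,\|\bhat\|,\mu^0}\left(\sqrt{n}m^{1.5}\kappa_{\Ahat}\|\Ahat\|\omega^2 L\right)$$
QRAM queries and $\tilde{\Ocal}_{\mu^0}(n^{1.5}m L)$ classical operations, which is exactly the claimed bound.

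The step I expect to be the main obstacle is justifying that the per-call cost is genuinely uniform across all refinements. Each inner call solves the scaled subproblem $(A,0,\nabla^k s^k)$ rather than the original data $(\Ahat,\bhat,c)$, so I must verify that none of the complexity parameters degrade with $k$. The constraint matrix is shared by every subproblem, hence $\kappa_{\Ahat}$ and $\|\Ahat\|$ are literally unchanged; the delicate point is to show that the scaling $\nabla^k=\frac{1}{(x^k)^Ts^k}$ normalizes the refined right-hand side so that the optimal-solution norm bound $\omega$ and the starting gap $\mu^0$ of each subproblem remain bounded independently of $k$, up to the polylogarithmic factors suppressed by $\tilde{\Ocal}$. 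Once this uniformity is in hand, the telescoping update $x^{k+1}=x^{k}+\frac{1}{\nabla^k}\hat{x}^k$ and its linear convergence to the exact optimum are precisely those analyzed in \cite{IF-QIPMforLO}, and the stated complexity follows by multiplying the $\Ocal(L)$ iteration count by the per-iteration cost.
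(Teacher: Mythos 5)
Your proposal is correct and takes essentially the same route as the paper: the paper's proof is a one-line appeal to \cite[Theorem 6.2]{IF-QIPMforLO}, whose argument is exactly your decomposition --- run Algorithm~\ref{alg: IF-QIPM} at fixed constant precision $\hat{\zeta}$ so that Theorem~\ref{theo: final complexity} gives a per-call cost with the $1/\zeta^3$ factor collapsed to a constant, multiply by the $\Ocal(L)$ outer refinement iterations needed to reach $\mu\leq 2^{-\Ocal(L)}$ and round. Your flagged obstacle, the uniformity of $\omega$ and the other parameters across the scaled subproblems $(A,0,\nabla^k s^k)$, is precisely what the cited analysis of \cite{IF-QIPMforLO} establishes, so deferring to it there is consistent with the paper's own treatment.
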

\begin{proof}
    The proof is similar to the proof of \cite[Theorem 6.2]{IF-QIPMforLO}.
\end{proof}

In the next section, we investigate how preconditioning can help to mitigate the effect of the condition number with respect to $\kappa_{\Ahat}$ and $\omega$.
\section{Iteratively Refined IF-QIPM using preconditioned NES}\label{sec: condition}
To mitigate the impact of the condition number, we need to analyze how the matrices $M^k$ and $E^k$ evolve through the iterations.
As in Theorem 6.8 of \citep{Wright1997_Primal} or Lemma \MakeUppercase{\romannumeral 1}.42 of \citep{Roos2005_Interior}, considering the optimal partition $B$ and $N$, we have
\begin{equation}\label{eq:diagonal scaling}
    \frac{x_i^k}{s_i^k}=\Ocal\left(\frac{\Chat}{\mu^{k}}\right)\to\infty\text{  for }i\in B\qquad \text{and} \qquad\frac{x_i^k}{s_i^k}=\Ocal\left(\frac{\mu^{k}}{\Chat}\right)\to0\text{  for }i\in N,
\end{equation}
where $\Chat$ is a constant dependent on the LO problem's parameters. For a more detailed analysis, see pages 121-124 of \citep{Wright1997_Primal}.
To analyze the condition number of NES, we have 
$$M^k=A(D^k)^2A^T=A_B(D_B^k)^2A_B^T+A_N(D_N^k)^2A_N^T.$$
As the sequence of iterates converges to the optimal set, it is easy to see that $A_N(D_N^k)^2A_N^T\to 0$. Thus, the dominant component is the $A_B(D_B^k)^2A_B^T$ term.
If $A_B$ includes a basis of $A$, i.e. $\textup{rank}(A_B)=m$, then the condition number of $M^k$ will converge to a constant depending on $\frac{\max_{i\in B} x^2_i}{\min_{i\in B} x^2_i}$.
This implies that when the problem is primal nondegenerate, the condition number will converge to a constant, though that constant may be as big as $\Ocal(2^{2L})$ in the worst case. If $A_B$ has a rank less than $m$, then the condition number of $M^k$ goes to infinity with the rate of $\frac{1}{\mu^2}$, which can be addressed by iterative refinement.
%
%
In the proposed IF-QIPM, we initially choose basis $\Bbar$. 
If in each iteration of IF-QIPM, we choose basis $\Bhat^k$ as indices of $m$ largest $\frac{x_i^k}{s_i^k}$, then by modifying MNES, we can precondition it too. 
We refer the readers to \cite{monteiro2004uniform} for the algorithm for determining basis $\Bhat^k$.
Suppose that the LO problem is non-degenerate. As the trajectory generated by the IF-QIPM converges to the optimal solution, we have
\begin{align*}
    \Bhat^k & \to B\\
    A_N(D_N^k)^2A_N^T&\to 0\\
    M^k &\to A_B(D_B^k)^2A_B^T\\
    \Mhat^k=(D_{\Bhat}^{k})^{-1}A_{\Bhat}^{-1}M^k((D_{\Bhat}^{k})^{-1}A_{\Bhat}^{-1})^T&\to I.
\end{align*}
Thus, $(D_{\Bhat}^{k})^{-1}A_{\Bhat}^{-1}$ is a precondition for $M^k$. 
When the LO problem is degenerate, which is the more general setting, Theorem 2.2.3 of \cite{o2006use} asserts that the condition number of $\Mhat^k$ is bounded by $(\bar{\chi})^2$ where
\begin{equation}
    \bar{\chi} = \max\left\{ \|A_B^{-1} A\|_F :\ A_B {\rm ~is~a~basis~of~} A\right\}.
\end{equation}
Furthermore, based on Lemma 2.2.2 of \cite{o2006use}, we have $\|\Mhat^k\|_F=\Ocal(\bar{\chi})$.

To utilize this preconditioning method within our IF-QIPM framework, we adopt the following procedure.
\begin{enumerate}
    \item []\textbf{Step 1.} Choose basis $\Bhat^k$ as indices of $m$ largest, $\frac{x_i^k}{s_i^k}$ where $A_{\Bhat^k}$ are linearly independent
    \item []\textbf{Step 2.} Build block-encoding of $(D_{\Bhat}^{k})^{-1}$ and $A_{\Bhat}$
    \item []\textbf{Step 3.} Calculate $A_{\Bhat}^{-1}$ on the quantum computer
    \item []\textbf{Step 4.} Build $\Mhat^k=(D_{\Bhat}^{k})^{-1}A_{\Bhat}^{-1}M^k((D_{\Bhat}^{k})^{-1}A_{\Bhat}^{-1})^T$  on the quantum computer
    \item []\textbf{Step 5.} Find $\tilde{z}^k$ such that $\Mhat^k\tilde{z}^k=\hat{\sigma}^k+\rhat^k$ and $\|\rhat^k\|\leq\frac{\eta}{\sqrt{1+\theta}}\sqrt{\mu^k}$ on the quantum computer
    \item []\textbf{Step 6.} Calculate $\widetilde{\Delta y}{}^k=((D_{\Bhat}^{k})^{-1}A_{\Bhat}^{-1})^T\tilde{z}^k$.
    \item []\textbf{Step 7.} Calculate $v^k=(v^k_{\Bhat},v^k_{\Nhat})=(D_{\Bhat}^{k}\rhat^k,0)$.
    \item []\textbf{Step 8.} Calculate $\widetilde{\Delta s}{}^k=c-A^Ty^k-s^k-A^T\widetilde{\Delta y}{}^k$.
    \item []\textbf{Step 9.} Calculate $\widetilde{\Delta x}{}^k=\beta_1\mu^k (S^{k})^{-1}e-x^k-(D^k)^2\widetilde{\Delta s}{}^k-v^k$.
\end{enumerate}
It is straightforward to provide a convergence proof of an IF-QIPM that uses this procedure, since it produces feasible-inexact iterates satisfying  $\|\rhat^k\|\leq\frac{\eta}{\sqrt{1+\theta}}\sqrt{\mu^k}$. Thus, the iteration complexity of IF-QIPM using preconditioned NES is $\Ocal(\sqrt{n}\log(\frac{\mu^0}{\zeta}))$. 

In order to estimate the asymptotic scaling of the overall complexity, we begin by analyzing the cost of block-encoding the Newton system coefficient matrix in each iteration. 
\begin{proposition}
    Suppose $A$ and $A_{\Bhat}$ are stored in a QRAM data structure. Then, one can prepare a block-encoding of 
    $$ \Mhat^k=(D_{\Bhat}^{k})^{-1}A_{\Bhat}^{-1}M^k((D_{\Bhat}^{k})^{-1}A_{\Bhat}^{-1})^T$$
    using $\widetilde{\Ocal}_{m, n, \frac{1}{\epsilon}} \left(1\right)$ accesses to the QRAM and $ \widetilde{\Ocal}\left( n \right) $ arithmetic operations.
\end{proposition}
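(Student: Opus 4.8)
The plan is to realize $\Mhat^k$ as a product of five elementary factors and to assemble a block-encoding of the whole matrix by repeatedly invoking the block-encoding arithmetic of \cite{Blockencoding}. Writing $E^k=(D_{\Bhat}^{k})^{-1}A_{\Bhat}^{-1}A D^k$, so that $\Ahat=A_{\Bhat}^{-1}A$ and $\Mhat^k=E^k(E^k)^T$ exactly as in the proof of Theorem~\ref{theo: QLSA solution}, I would build block-encodings of the factors $(D_{\Bhat}^{k})^{-1}$, $A_{\Bhat}^{-1}$, $A$, and $D^k$ separately, compose them with the product rule for block-encodings, and then form $E^k(E^k)^T$ with one further composition against the adjoint. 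Each composition multiplies the normalization constants and adjoins ancilla registers, so the real bookkeeping is to track these normalization factors and the accumulated approximation error.

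First I would produce the two QRAM-based factors. Since $A$ and $A_{\Bhat}$ are stored in the QRAM data structure, the standard construction of \cite{Blockencoding} yields an $(\|A\|_F,\cdot,\epsilon)$-block-encoding of $A$ and an $(\|A_{\Bhat}\|_F,\cdot,\epsilon)$-block-encoding of $A_{\Bhat}$, each queried $\widetilde{\Ocal}_{m,n,\frac{1}{\epsilon}}(1)$ times per invocation. The two diagonal factors I would handle at the entry level: from the current iterate $(x^k,s^k)$ one computes the $n$ diagonal entries of $D^k=(S^k)^{-1/2}(X^k)^{1/2}$ and the $m$ entries of $(D_{\Bhat}^{k})^{-1}$ in $\widetilde{\Ocal}(n)$ arithmetic operations, writes them to the QRAM, and obtains diagonal block-encodings with $\widetilde{\Ocal}_{m,n,\frac{1}{\epsilon}}(1)$ further accesses. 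This is the sole source of the $\widetilde{\Ocal}(n)$ classical cost.

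Next I would obtain a block-encoding of $A_{\Bhat}^{-1}$ by applying the QSVT-based matrix-inversion routine of \cite{Blockencoding} to the block-encoding of $A_{\Bhat}$, producing a block-encoding of $A_{\Bhat}^{-1}$ up to the usual $1/\kappa_{A_{\Bhat}}$ scaling. Composing the four factors in the order $(D_{\Bhat}^{k})^{-1}\cdot A_{\Bhat}^{-1}\cdot A\cdot D^k$ yields a block-encoding of $E^k$ whose normalization is the product of the individual ones, and one final product of this encoding with its adjoint returns the desired block-encoding of $\Mhat^k=E^k(E^k)^T$. Since only a constant number of compositions are performed, and each QRAM-stored factor is referenced $\widetilde{\Ocal}_{m,n,\frac{1}{\epsilon}}(1)$ times, the QRAM-access count for setting up the circuit stays at $\widetilde{\Ocal}_{m,n,\frac{1}{\epsilon}}(1)$ while the classical work is dominated by forming the two diagonal scalings, i.e.\ $\widetilde{\Ocal}(n)$.

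The step I expect to be the main obstacle is the inversion $A_{\Bhat}^{-1}$. The QSVT inversion circuit invokes the block-encoding of $A_{\Bhat}$ a number of times that scales with $\kappa_{A_{\Bhat}}$, so one must be careful to separate \emph{accesses to the QRAM data structure} (which are set up once and reused, hence $\widetilde{\Ocal}_{m,n,\frac{1}{\epsilon}}(1)$) from \emph{applications of the composed block-encoding unitary} (which carry the $\kappa_{A_{\Bhat}}$ factor and are properly charged to the solve stage rather than to the block-encoding preparation). Alongside this, one must verify that the normalization constants and the $\epsilon$-errors accumulated along the five-fold chain of products, which the product rule amplifies multiplicatively, can still be absorbed into the target accuracy by choosing the per-factor precision polylogarithmically small. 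Propagating the normalization and error through the composition, rather than the composition itself, is where the care is required.
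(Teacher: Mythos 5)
Your proposal follows essentially the same route as the paper's proof: block-encode $A$ and $A_{\Bhat}$ from the QRAM data structure, classically compute and store the diagonal scalings (the sole source of the $\widetilde{\Ocal}(n)$ arithmetic cost), obtain $A_{\Bhat}^{-1}$ via QSVT-based inversion of the block-encoding of $A_{\Bhat}$, and assemble $\Mhat^k$ through a constant number of block-encoding products while propagating normalizations and errors. The only differences are cosmetic: the paper groups the factors as $U_P U_M U_{P^{\top}}$ with $P=(D_{\Bhat}^{k})^{-1}A_{\Bhat}^{-1}$ and $M^k=A(D^k)^2A^T$ rather than as $E^k(E^k)^T$, and it handles your concern about the $\kappa_{A_{\Bhat}}$-fold cost of inversion by folding that factor into the subnormalization $\|A_{\Bhat}\|_F\,\kappa_{A_{\Bhat}}$ of the encoded inverse, so that it is charged at the linear-system-solve stage (Corollary~\ref{corr:QLS-NES}) exactly as you propose.
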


\begin{proof}
    First, observe that we always have classical access to $x^k$ and $s^k$. We can therefore store the nonzero entries of the matrices $\left( D^k \right)^2$ and $\left( D_{\Bhat}^k \right)^{-1}$ in QRAM using $\widetilde{\Ocal}_n (n)$ classical operations. From here, applying \cite[Lemma 50]{gilyen2019quantum} asserts that $\widetilde{\Ocal}_{n, \frac{1}{\xi_D}} (1)$ accesses to the QRAM suffices to construct an $\left( \alpha_{D^2}, \log(n) + 2, \xi_D \right)$-block-encoding of $(D_{\Bhat}^{k})^{2}$ and an $\left( \alpha_{D_{\Bhat}}, \log(n) + 2, \xi_D \right)$-block-encoding of $(D_{\Bhat}^{k})^{-1}$. Likewise, with $A$ and $A_{\Bhat}$ stored in QRAM, invoking \cite[Lemma 50]{gilyen2019quantum} we can construct a $\left( \left\| A \right\|_F , \log(n) + 2, \xi_A \right)$-block-encoding of $A$ and a $\left( \left\| A_{\Bhat} \right\|_F , \log(n) + 2, \xi_A \right)$-block-encoding of $A_{\Bhat}$, using $\widetilde{\Ocal}_{m, n, \frac{1}{\xi_A}} (1)$ accesses to the QRAM. 
    
    From here, we will analyze the cost of preparing block-encodings of the terms 
    \begin{align*}
        M^k&=A(D^k)^2A^T=A_B(D_B^k)^2A_B^T+A_N(D_N^k)^2A_N^T \\
        P  &= \left( D_{\Bhat}^{k}\right)^{-1}A_{\Bhat}^{-1}
    \end{align*}
    Having prepared block-encodings of $A$ and $\left( D^k \right)^2$, we can take their product 
    prepare an $\left(\alpha_{D^2} \left\| A \right\|^2_F , \Ocal \left(\log(n)\right), \xi_M \right)$-block-encoding of $M^k$ as
    $$U_A U_{D^2} U_{A^{\top}} = U_{M}.$$ 
     Similarly, having prepared a $\left( \left\| A_{\Bhat} \right\|_F , \log(n) + 2, \xi_A \right)$-block-encoding of $A_{\Bhat}$ for $A_{\Bhat}$, applying \cite[Corollary 3.4.13]{gilyen2019thesis}, we can prepare a $\left(  \left\| A_{\Bhat} \right\|_F \kappa_{A_{\Bhat}}, \log(n) + 2, \xi_A \right)$-block-encoding of $\left(A_{\Bhat}\right)^{-1}$. From here, applying \cite[Lemma 4]{Blockencoding}, we can take the product of block-encodings  of $\left(A_{\Bhat}\right)^{-1}$ and $(D_{\Bhat}^{k})^{-1}$, which yields an $\left(\alpha_{D^{-1}} \kappa_{A_{\Bhat}} \left\| A_{\Bhat} \right\|_F, \Ocal \left(\log(n)\right), \xi_P \right)$-block-encoding of $P$.

    Observe that we have prepared block-encodings $U_M$ and $U_P$ for $M^k$ and $P$ such that 
    $$ U_P U_M U_{P^{\top}}$$
    corresponds to a $\left(\alpha_{D^{-1}} \alpha^2_{D^2} \kappa_{A_{\Bhat}}^2 \left\| A \right\|^2_F \left\| A_{\Bhat} \right\|_F^2, \Ocal \left(\log(n)\right), \xi_P \right)$-block-encoding of $P$, since it is defined as the product of block-encodings \cite[Lemma 4]{Blockencoding}. The complexity result follows from observing that constructing the unitary 
    $$U_{\Mhat} = U_P U_M U_{P^{\top}} = U_P U_A U_{D^2} U_{A^{\top}} U_{P^{\top}}$$
    has a gate cost of $\widetilde{\Ocal}_{m,n,\frac{1}{\epsilon}}$, where we have properly set the parameters $\xi_A$ and $\xi_D$, such that $U_{\Mhat}$ implements $\Mhat$ up to error $\epsilon$. We also needed $\widetilde{\Ocal}_n (n)$ classical operations to create the QRAM data structures for $D^{-1}$ and $D^2$. The proof is complete. 
\end{proof}

\begin{corollary}\label{corr:QLS-NES}
    Suppose $A$, $A_{\Bhat}$, $x^k$, $s^k$ and $\hat{\sigma}^k$ are stored in a QRAM data structure and define 
    $$\alpha := \alpha_{D^{-1}} \alpha^2_{D^2}.$$
    Then, one can obtain a $\frac{\eta}{\sqrt{1+\theta}} \sqrt{\mu^k}$-precise solution (in $\ell_{\infty}$-norm) to the linear system 
    $$\Mhat^k\tilde{z}^k=\hat{\sigma}^k,$$
    using at most 
    $$ \widetilde{\Ocal}_{m, n, \frac{1}{\epsilon}} \left( m \alpha \kappa_{A_{\Bhat}}^2   \kappa_{\Mhat} \left\| A \right\|^2_F \left\| A_{\Bhat} \right\|_F^2 \right) $$
    QRAM accesses and $ \widetilde{\Ocal} \left( mn \right) $ arithmetic operations. 
\end{corollary}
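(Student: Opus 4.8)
The plan is to obtain the corollary as a direct composition of the preceding Proposition with the iterative-refinement QLSA+QTA solver of \cite{mohammadisiahroudi2023Accurately}, so that essentially all that remains is to propagate the block-encoding normalization factor through the known complexity bound. First I would invoke the Proposition, which furnishes a block-encoding $U_{\Mhat}$ of $\Mhat^k$ whose normalization factor is
$$\psi = \alpha_{D^{-1}} \alpha^2_{D^2} \kappa_{A_{\Bhat}}^2 \|A\|_F^2 \|A_{\Bhat}\|_F^2 = \alpha \kappa_{A_{\Bhat}}^2 \|A\|_F^2 \|A_{\Bhat}\|_F^2,$$
using the definition $\alpha := \alpha_{D^{-1}} \alpha^2_{D^2}$, prepared with $\widetilde{\Ocal}_{m,n,\frac{1}{\epsilon}}(1)$ QRAM accesses and $\widetilde{\Ocal}(n)$ arithmetic operations. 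Since $A$, $A_{\Bhat}$, $x^k$, $s^k$, and $\hat{\sigma}^k$ are assumed to reside in QRAM, both the state $\ket{\hat{\sigma}^k}$ and the block-encoding $U_{\Mhat}$ are available to the solver.

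Next I would feed $U_{\Mhat}$ and $\ket{\hat{\sigma}^k}$ into the solver of \cite{mohammadisiahroudi2023Accurately}. Recall that for an $m\times m$ system of condition number $\kappa_{\Mhat}$, the QLSA of \cite{Blockencoding} prepares the solution state using $\widetilde{\Ocal}(\kappa_{\Mhat}\psi)$ queries to the block-encoding, the tomography step of \cite{van2023quantum} contributes the dimension factor $m$, and the iterative refinement boosts the accuracy to the target $\ell_\infty$ bound $\frac{\eta}{\sqrt{1+\theta}}\sqrt{\mu^k}$ with only polylogarithmic overhead, which is absorbed by the $\frac{1}{\epsilon}$ subscript. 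Multiplying these contributions and substituting $\psi$ yields the claimed
$$\widetilde{\Ocal}_{m,n,\frac{1}{\epsilon}} \left( m \alpha \kappa_{A_{\Bhat}}^2 \kappa_{\Mhat} \|A\|_F^2 \|A_{\Bhat}\|_F^2 \right)$$
QRAM accesses; the only structural difference from Theorem~\ref{theo: QLSA solution} is that the block-encoding is now assembled on the quantum computer, so the relevant norm factor is the inflated $\psi$ rather than $\|E^k\|_F$.

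For the classical cost I would account for the residual computations that drive the refinement. Each refinement step requires evaluating $\hat{\sigma}^k - \Mhat^k \tilde{z}^k$; exploiting the factorization $\Mhat^k = E^k (E^k)^T$ with $E^k=(D_{\Bhat}^{k})^{-1}\Ahat D^k \in \mathbb{R}^{m\times n}$, this matrix-vector product costs $\Ocal(mn)$, and since only polylogarithmically many refinement steps are needed, the total classical cost is $\widetilde{\Ocal}(mn)$, which dominates the $\widetilde{\Ocal}(n)$ spent preparing the QRAM data structures for $(D^k)^2$ and $(D_{\Bhat}^k)^{-1}$ in the Proposition. The point demanding the most care — and the step I expect to be the main obstacle — is verifying that the enlarged normalization factor $\psi$ (which exceeds $\|\Mhat^k\|_F$ because of the $\kappa_{A_{\Bhat}}^2$ and Frobenius-norm inflations incurred when composing the block-encodings of $M^k$ and $P=(D_{\Bhat}^k)^{-1}A_{\Bhat}^{-1}$) enters the QLSA cost linearly, and that the target $\ell_\infty$ precision is attained within the refinement loop rather than in a single limited-precision solve; everything else is bookkeeping inherited from the cited results.
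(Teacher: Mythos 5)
Your proposal is correct and follows essentially the same route as the paper: the paper's proof is a one-line invocation of the solver of \cite{mohammadisiahroudi2023Accurately} with the subnormalization factor $\alpha_{D^{-1}}\alpha^2_{D^2}\kappa_{A_{\Bhat}}^2\|A\|_F^2\|A_{\Bhat}\|_F^2$ from the preceding Proposition and condition number $\kappa_{\Mhat}$, which is exactly your composition. Your additional bookkeeping (the tomography dimension factor $m$, the $\widetilde{\Ocal}(mn)$ classical residual cost via the factored form of $\Mhat^k$, and the observation that the inflated normalization $\psi$ enters linearly) is a more explicit account of what the paper leaves implicit, and it is consistent with the complexity bounds the paper cites for that solver.
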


\begin{proof}
    Using the linear systems algorithm from \cite{mohammadisiahroudi2023Accurately} with the subnormalization factor $\alpha_{D^{-1}} \alpha^2_{D^2} \kappa_{A_{\Bhat}}^2 \left\| A \right\|^2_F \left\| A_{\Bhat} \right\|_F^2$ and condition number $\kappa_{\Mhat}$ gives the result. 
\end{proof}

We are now in a position to state the complexity of the Iteratively Refined IF-QIPM using preconditioned NES. 

\begin{theorem}
    Suppose that the LO problem data $A,b,c$ is stored in QRAM. Then, the Iteratively Refined IF-QIPM using preconditioned NES obtains an $\epsilon$-precise solution to the primal-dual LO pair $(P)-(D)$ using at most 
    $$ \widetilde{\Ocal}_{m, n, \frac{1}{\epsilon}} \left( \sqrt{n} m \alpha \kappa_{A_{\Bhat}}^2  \kappa_{\Mhat} \left\| A \right\|^2_F  \left\| A_{\Bhat} \right\|_F^2 \right) $$
    QRAM accesses and $ \widetilde{\Ocal}_{\frac{1}{\epsilon}} \left( mn^{1.5} \right) $ arithmetic operations, where $\alpha \geq \| \Mhat^k \|_F$ for all $k$. 
\end{theorem}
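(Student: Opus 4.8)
The plan is to assemble the total cost from three pieces: the iteration count of a single IF-QIPM call, the per-iteration cost of building and solving the preconditioned NES, and the number of outer iterative-refinement rounds needed to reach $\epsilon$-precision. First I would invoke the observation made just before the block-encoding Proposition: because the preconditioned procedure (Steps 1--9) still returns a feasible-inexact Newton step satisfying $\|\rhat^k\|\leq\frac{\eta}{\sqrt{1+\theta}}\sqrt{\mu^k}$, Lemmas~\ref{lem: newton form} and~\ref{lem: residual bound} apply verbatim, so the convergence analysis of Theorem~\ref{theo: iteration bound} is inherited and each IF-QIPM call terminates in $\Ocal(\sqrt{n}\log(\mu^0/\zeta))$ iterations.

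Next I would bound the per-iteration work. The block-encoding Proposition supplies a block-encoding of $\Mhat^k$ using $\widetilde{\Ocal}_{m,n,\frac{1}{\epsilon}}(1)$ QRAM accesses and $\widetilde{\Ocal}(n)$ classical operations; feeding this into the accurate linear-system solver of \cite{mohammadisiahroudi2023Accurately} as in Corollary~\ref{corr:QLS-NES} produces the required $\frac{\eta}{\sqrt{1+\theta}}\sqrt{\mu^k}$-precise $\tilde{z}^k$ at a cost of $\widetilde{\Ocal}_{m,n,\frac{1}{\epsilon}}(m\alpha\kappa_{A_{\Bhat}}^2\kappa_{\Mhat}\|A\|_F^2\|A_{\Bhat}\|_F^2)$ QRAM accesses and $\widetilde{\Ocal}(mn)$ arithmetic operations. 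Steps 6--9, which recover $(\widetilde{\Delta x}{}^k,\widetilde{\Delta y}{}^k,\widetilde{\Delta s}{}^k)$, together with the basis selection of Step~1, add only $\Ocal(mn)$ further classical work, so they do not dominate the per-iteration cost.

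I would then multiply. Combining the $\Ocal(\sqrt{n}\log(\mu^0/\zeta))$ inner iterations with the per-iteration quantum cost yields $\widetilde{\Ocal}_{m,n,\frac{1}{\epsilon}}(\sqrt{n}m\alpha\kappa_{A_{\Bhat}}^2\kappa_{\Mhat}\|A\|_F^2\|A_{\Bhat}\|_F^2)$ QRAM accesses and $\sqrt{n}\cdot mn=mn^{1.5}$ classical operations per IF-QIPM call. Finally, wrapping Algorithm~\ref{alg: IF-QIPM} in the iterative-refinement scheme of Algorithm~\ref{alg:iterative refinement}, each round improves the optimality gap by the fixed factor $\hat{\zeta}$, so $\Ocal(\log(1/\epsilon))$ rounds suffice to reach $\epsilon$-precision, exactly as in \cite[Theorem 6.2]{IF-QIPMforLO}; this logarithmic factor is absorbed into the $\widetilde{\Ocal}_{\frac{1}{\epsilon}}$ notation, giving the stated bounds of $\widetilde{\Ocal}_{\frac{1}{\epsilon}}(mn^{1.5})$ classical operations and the claimed QRAM-access count.

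The main obstacle is justifying that the parameters appearing in the quantum cost may be replaced by quantities uniform over all inner iterations and all refinement rounds. The subnormalization factor $\alpha$, the preconditioned condition number $\kappa_{\Mhat}$, and $\|A_{\Bhat}\|_F$ all depend on the iteration-dependent basis $\Bhat^k$ and on the rescaled right-hand side used in each refinement round. I would handle this by appealing to the degenerate-case analysis recorded above: Theorem 2.2.3 and Lemma 2.2.2 of \cite{o2006use} bound $\kappa_{\Mhat}\leq\bar\chi^2$ and $\|\Mhat^k\|_F=\Ocal(\bar\chi)$ independently of $k$, which lets me take $\alpha\geq\|\Mhat^k\|_F$ uniformly, while the fact that iterative refinement always solves a system with the \emph{same} matrix $A$ (only the right-hand side is rescaled) keeps $\kappa_{A_{\Bhat}}$ and $\|A_{\Bhat}\|_F$ controlled across rounds.
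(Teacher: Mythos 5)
Your proposal is correct and follows essentially the same route as the paper: the paper's proof is a one-line instruction to adjust the proof of Theorem~\ref{theo: final complexity} using Corollary~\ref{corr:QLS-NES}, and your argument is exactly that adjustment spelled out — inherit the $\Ocal(\sqrt{n}\log(\mu^0/\zeta))$ iteration bound, charge each iteration the cost from Corollary~\ref{corr:QLS-NES}, and absorb the iterative-refinement rounds into the $\widetilde{\Ocal}$ subscripts. Your closing discussion of parameter uniformity via the bounds $\kappa_{\Mhat}\leq\bar\chi^2$ and $\|\Mhat^k\|_F=\Ocal(\bar\chi)$ matches the remarks the paper itself makes immediately surrounding the theorem.
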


\begin{proof}
    The result follows upon adjusting the proof of Theorem \ref{theo: final complexity} to account for the result in Corollary \ref{corr:QLS-NES}.
\end{proof}
Based on analysis of \cite{o2006use}, we have $\alpha=\Ocal(\|\Mhat\|)=\Ocal(\bar{\chi})$ and $\kappa_{\Mhat}=\Ocal(\bar{\chi}^2)$. As $\Bhat$ always forms a basis for $A$. it is reasonable to assume that $\|A_{\Bhat}\|_F=\Ocal(\|A_{\Bhat}\|)$ and $\kappa_{A_{\Bhat}}=\Ocal(\kappa_{A})$. Thus, we can simplify the quantum complexity to 
$$ \widetilde{\Ocal}_{m, n, \frac{1}{\epsilon}} \left( \sqrt{n} m \bar{\chi}^3 \kappa_{A}^2   \left\| A \right\|^4_F   \right) $$
with $ \widetilde{\Ocal}_{\frac{1}{\epsilon}} \left( mn^{1.5} \right) $ arithmetic operations.

\section{Numerical Experiments}\label{sec: numerical}
In this section, we present a series of numerical experiments aimed at elucidating the behavior of various QIPMs. Additionally, we compare the impact of employing iterative refinement versus preconditioning techniques. It is important to note that our analysis presupposes access to QRAM; however, it is essential to highlight that physical QRAM infrastructure has yet to be realized. Likewise, QLSAs remain beyond the capabilities of existing quantum hardware.

Consequently, our experiments are conducted using the IBM Qiskit HHL simulator, and it is imperative to acknowledge that our numerical results cannot be extrapolated to gauge the performance of QIPMs and QLSAs on actual quantum hardware. Simulating quantum computers on classical computers is known to be exponentially time-consuming, which precludes any empirical time comparison between classical and quantum methodologies at this juncture. Notwithstanding, we are capable of simulating QLSAs for problems with limited numbers of variables and manageable condition numbers.

Our primary focus therefore centers on presenting numerical findings pertaining to QIPMs, and we refrain from presenting QLSA results in this paper. Interested readers are referred to \cite{mohammadisiahroudi2023Accurately} for comprehensive numerical experiments related to QLSAs. 
Each of the algorithms discussed in this paper have been implemented in Python and are readily accessible on our GitHub repository at \url{https://github.com/QCOL-LU}. Our Python package encompasses a versatile array of QIPMs designed to solve linear, semidefinite, and second-order cone optimization problems. To enhancing their versatility and efficacy, we have also incorporated iterative refinement techniques into both Quantum Linear System Algorithms (QLSAs) and QIPMs. Users are offered the flexibility to conduct experiments with QIPMs using either classical or quantum linear solvers, with the option to employ preconditioning.

For our experimental setup, we employ the LOP generators described in \cite{mohammadisiahroudi2023generating}. These generators have been demonstrated to produce randomly generated Linear Optimization Problems (LOPs) with predefined optimal and interior solutions, thereby facilitating the evaluation of IF-QIPMs. Furthermore, these generators offer users the flexibility to control various characteristics of the problems, including the condition number of the coefficient matrix—a critical parameter for assessing QIPMs' performance.
Our numerical experiments were conducted on a workstation equipped with Dual Intel Xeon{\textregistered} CPU E5-2630~@ 2.20 GHz, featuring 20 cores and 64 GB of RAM.

\begin{figure}[tbhp] \centering
\subfloat[A nondegenerate LO with $\kappa_A=10$]{\label{fig: WANDCon}\includegraphics[width =0.5 \linewidth, trim = {.1cm .1cm .1cm .1cm}, clip]{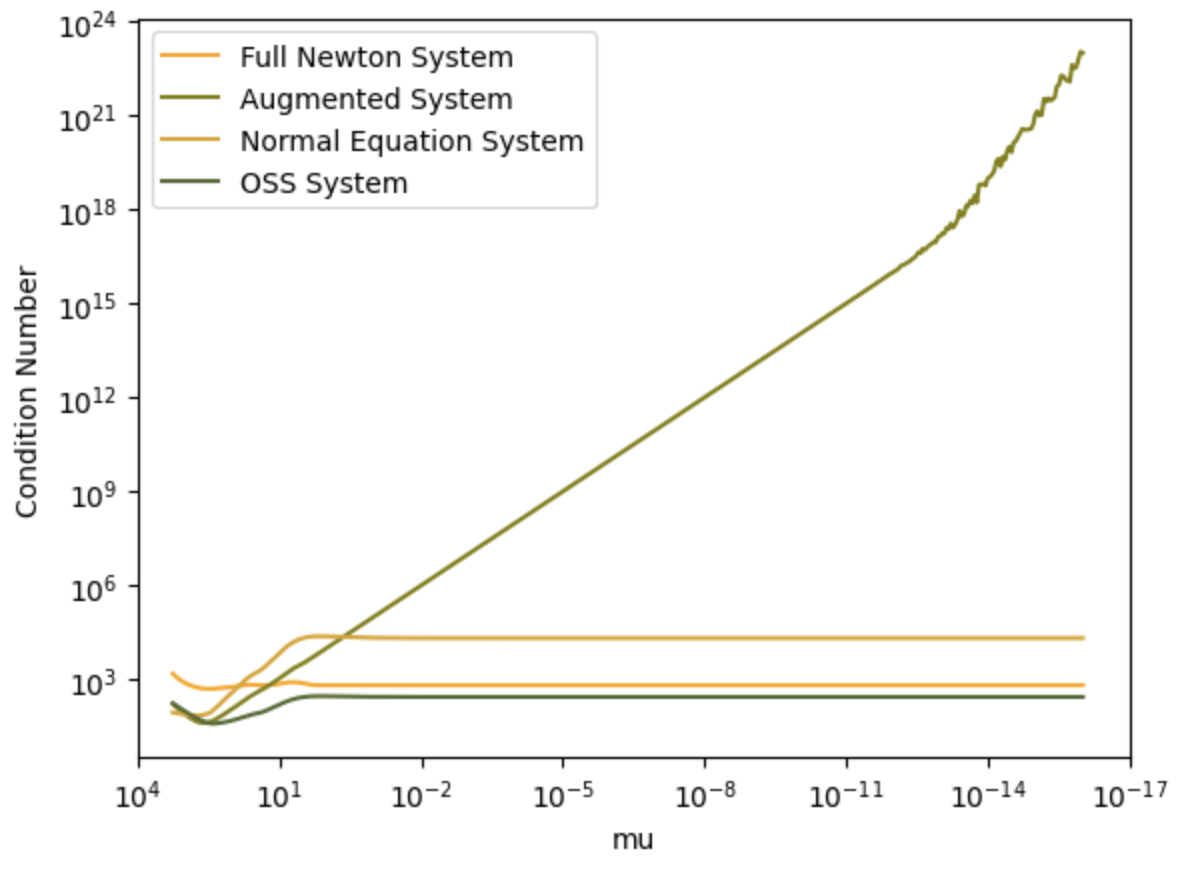}}
\subfloat[A nondegenerate LO with $\kappa_A=10^{6}$]{\label{fig: IANDCon}\includegraphics[width =0.5 \linewidth, trim = {.1cm .1cm .1cm .1cm}, clip]{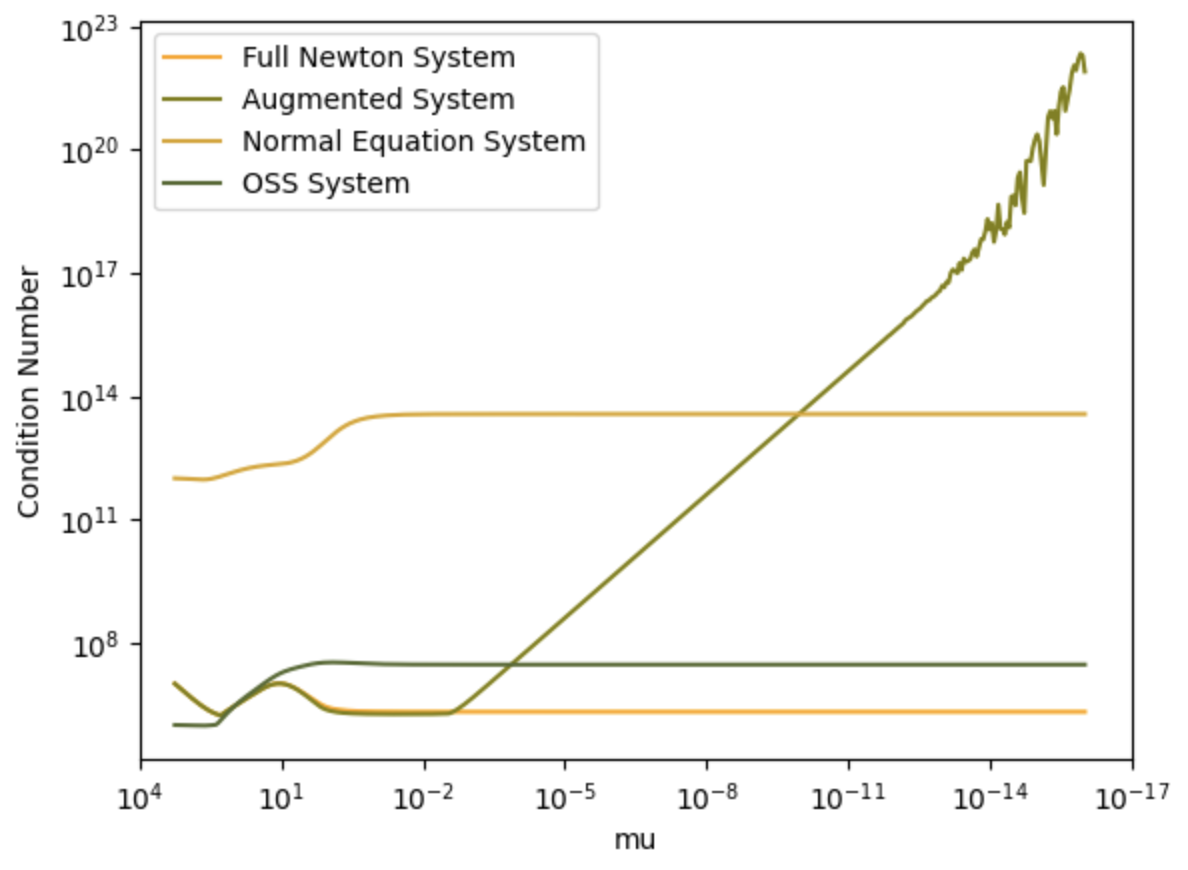}}\\
\subfloat[A degenerate LOP with $\kappa_A=10$]{\label{fig: WADCon}\includegraphics[width =0.5 \linewidth, trim = {.1cm .3cm .1cm .1cm}, clip]{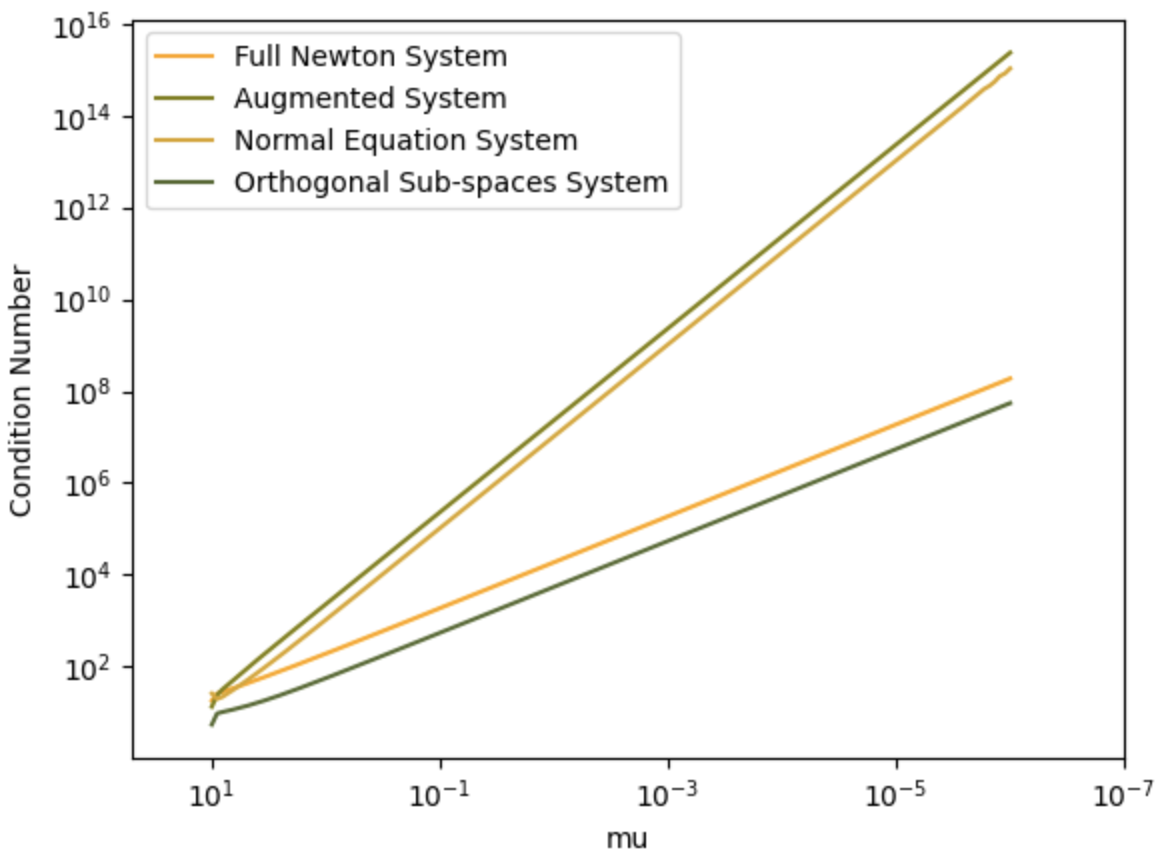}}
\subfloat[A degenerate LOP with $\kappa_A=10^{6}$]{\label{fig: IADCon}\includegraphics[width =0.5 \linewidth, trim = {.1cm .1cm .1cm .1cm}, clip]{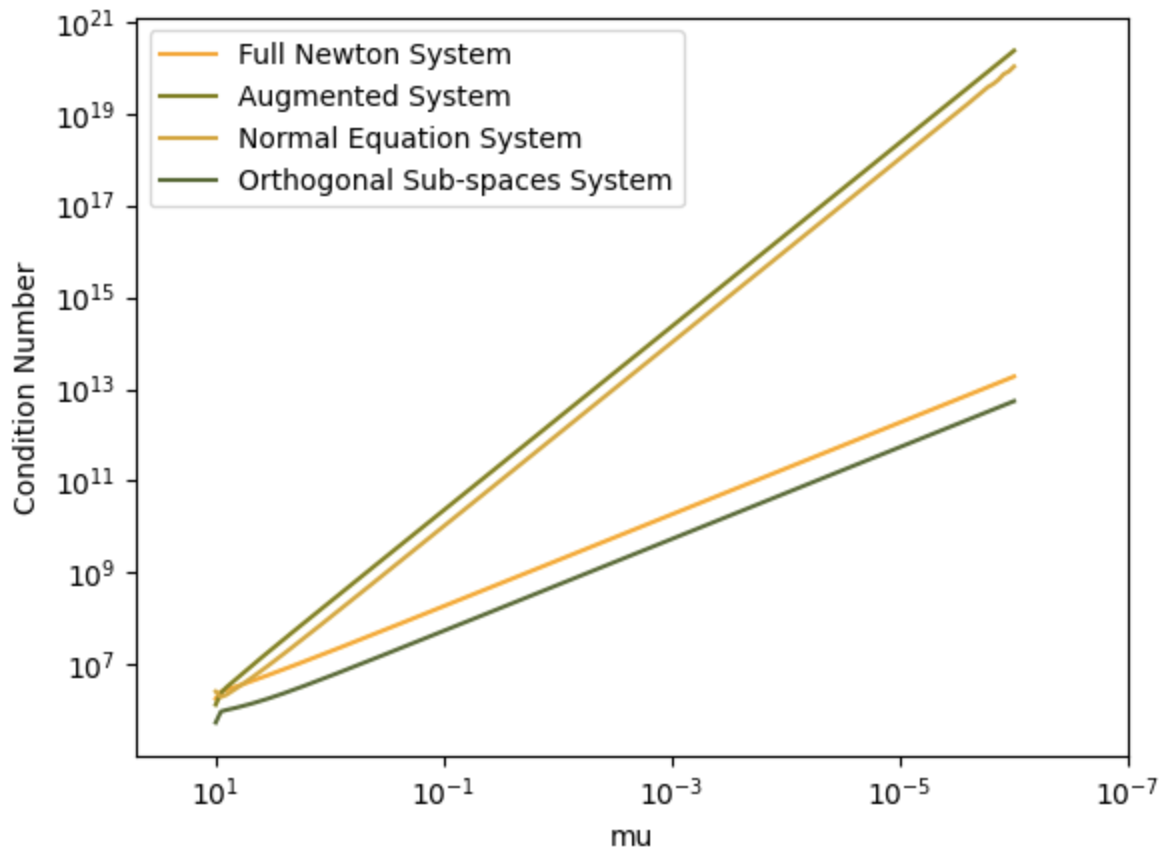}}
\caption{Condition number trend of different Newton systems for different types of LOPs. } \label{fig: con-sys}
\end{figure}
To illustrate how the condition numbers of different Newton systems evolve in IPMs, Fig.~\ref{fig: con-sys} shows the condition number of different linear systems trend for four problems. As Fig.~\ref{fig: WANDCon} shows, the condition number of FNS, OSS, and NES converge to a constant for nondegenerate LOPs with a well-conditioned matrix $A$. However, the condition number of the augmented system may go to infinity,  even for nondegenerate well-conditioned problems, as approaching to the unique optimal solution. For nondegenerate problems with ill-conditioned matrices, Fig.~\ref{fig: IANDCon}, the condition number of NES, OSS, and FNS still converge to a constant which can be very large, like $10^{12}$. If the LO problem is degenerate, Fig.~\ref{fig: WADCon}, the condition number of all Newton systems goes to infinity as approaching the optimal solution. However, FNS and OSS have a better rate than NES and AS. The worst case happens when the problem is degenerate and matrix $A$ is ill-conditioned, Fig.~\ref{fig: IADCon}. In this case, the condition number of NES can be as large as $10^{20}$ for $\mu=10^{-6}$. As these figures illustrate, the condition number of the Newton systems is affected by the condition number of matrix $A$ and the degeneracy status of the problem. Generally, OSS has a better condition number than the NES. In the next figures, we show how iterative refinement and preconditioning can mitigate the condition number of Newton systems, especially for NES.

\begin{figure}[tbhp] \centering
\subfloat[Without iterative refinement]{\label{fig: IANDQIPM}\includegraphics[width =0.5 \linewidth, trim = {.1cm .1cm .1cm .1cm}, clip]{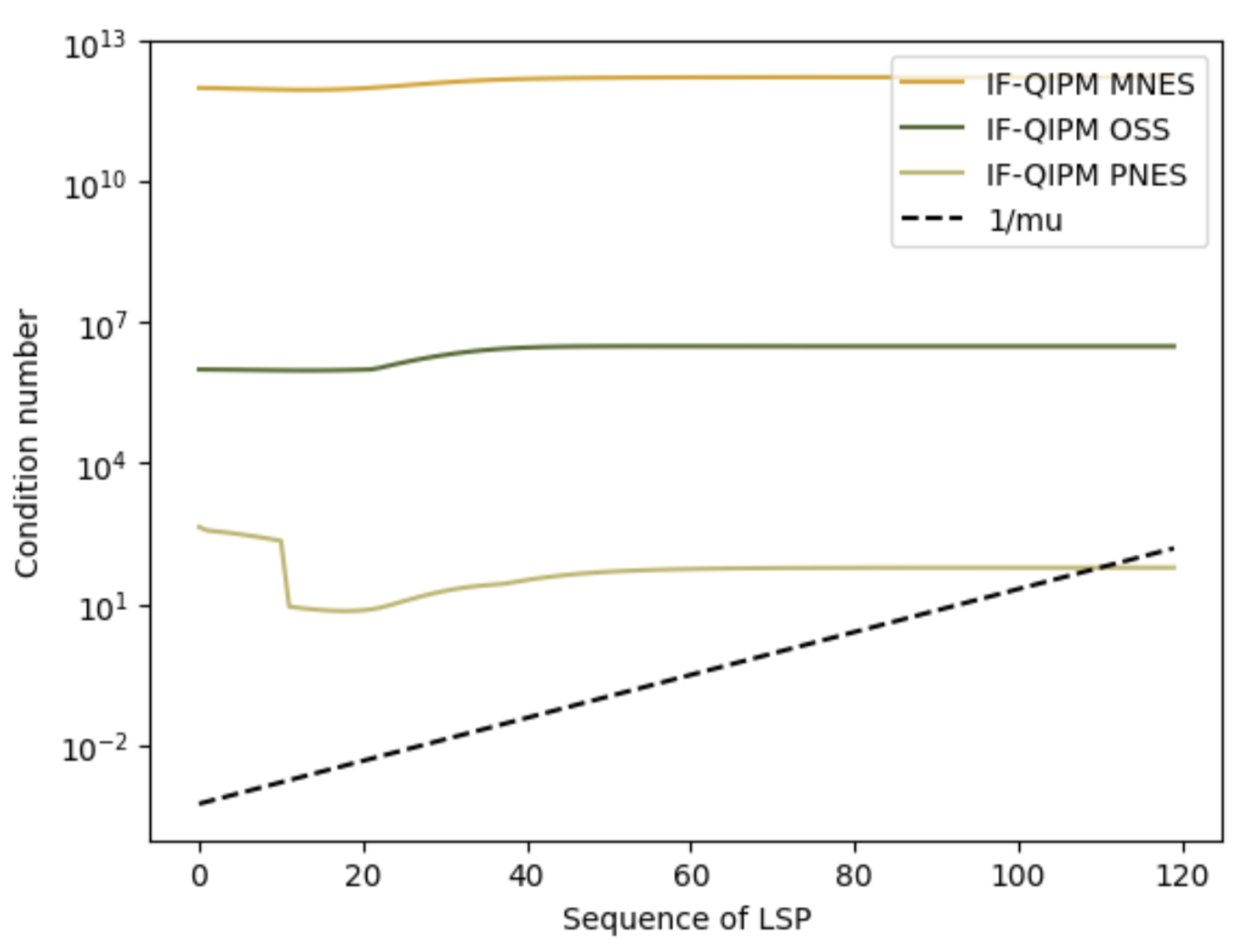}}
\subfloat[With iterative refinement]{\label{fig: IANDIR}\includegraphics[width =0.5 \linewidth, trim = {.1cm .1cm .1cm .1cm}, clip]{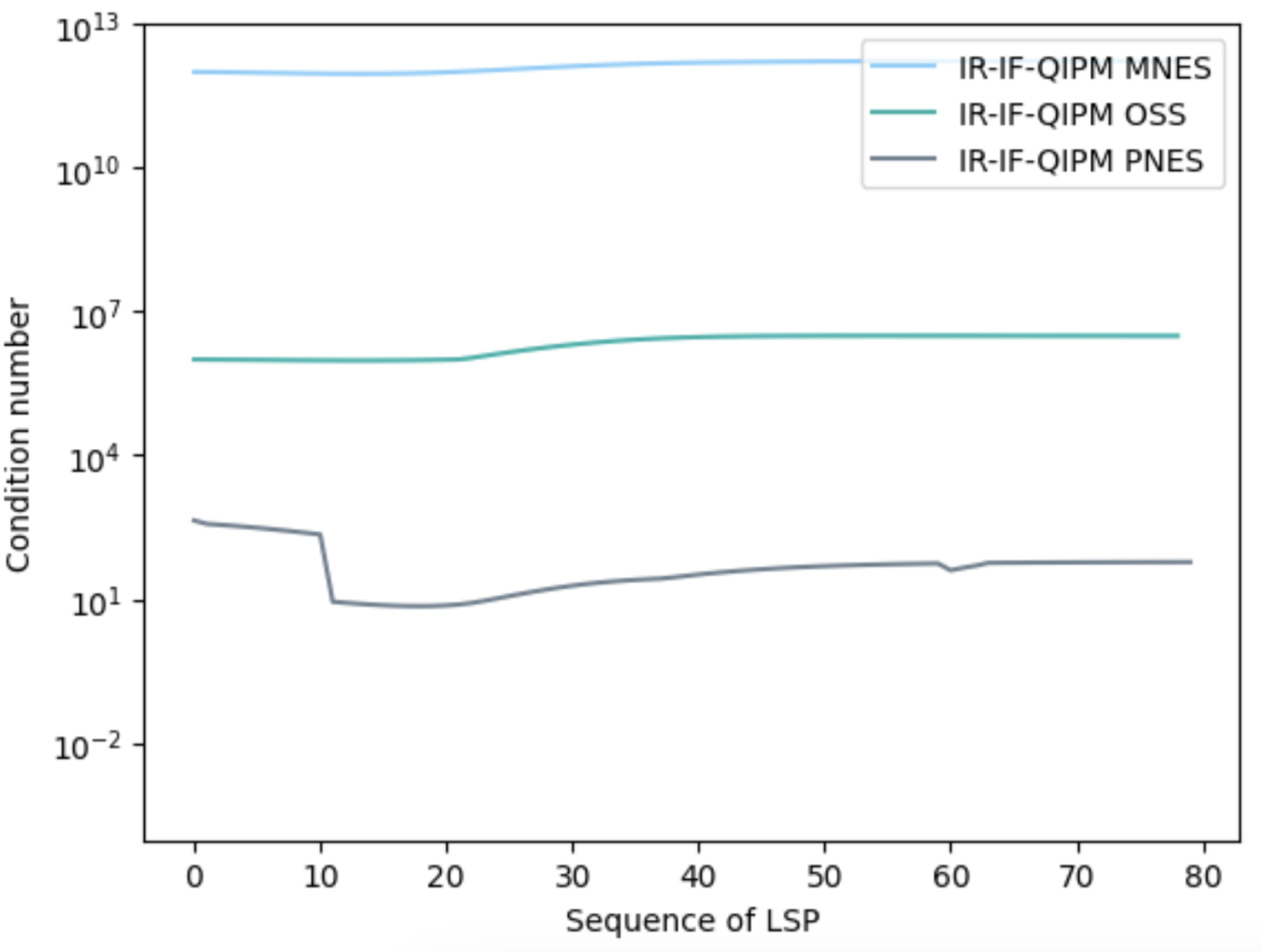}}
\caption{Condition number sequence of linear systems arising in different IF-QIPM to solve a nondegenerate LOP with
 $\kappa_A=10^{6}$ } \label{fig: IAND}
\end{figure}
Fig.~\ref{fig: IAND} shows the performance of different IF-QIPMs with respect to condition number to solve a nondegenerate problem with an ill-conditioned matrix $A$. As we can see, Preconditioned NES (PNES) has a significantly smaller condition number, even better than OSS. However, iterative refinement, Fig.~\ref{fig: IANDIR}, is not helping with the condition number since for this type of problem, early stopping the IF-QIPM and restarting it will not change the condition number as it is almost  constant, dependent on the condition number of $A$.
\begin{figure}[tbhp] \centering
\subfloat[Without iterative refinement]{\label{fig: WADQIPM}\includegraphics[width =0.5 \linewidth, trim = {.1cm .1cm .1cm .1cm}, clip]{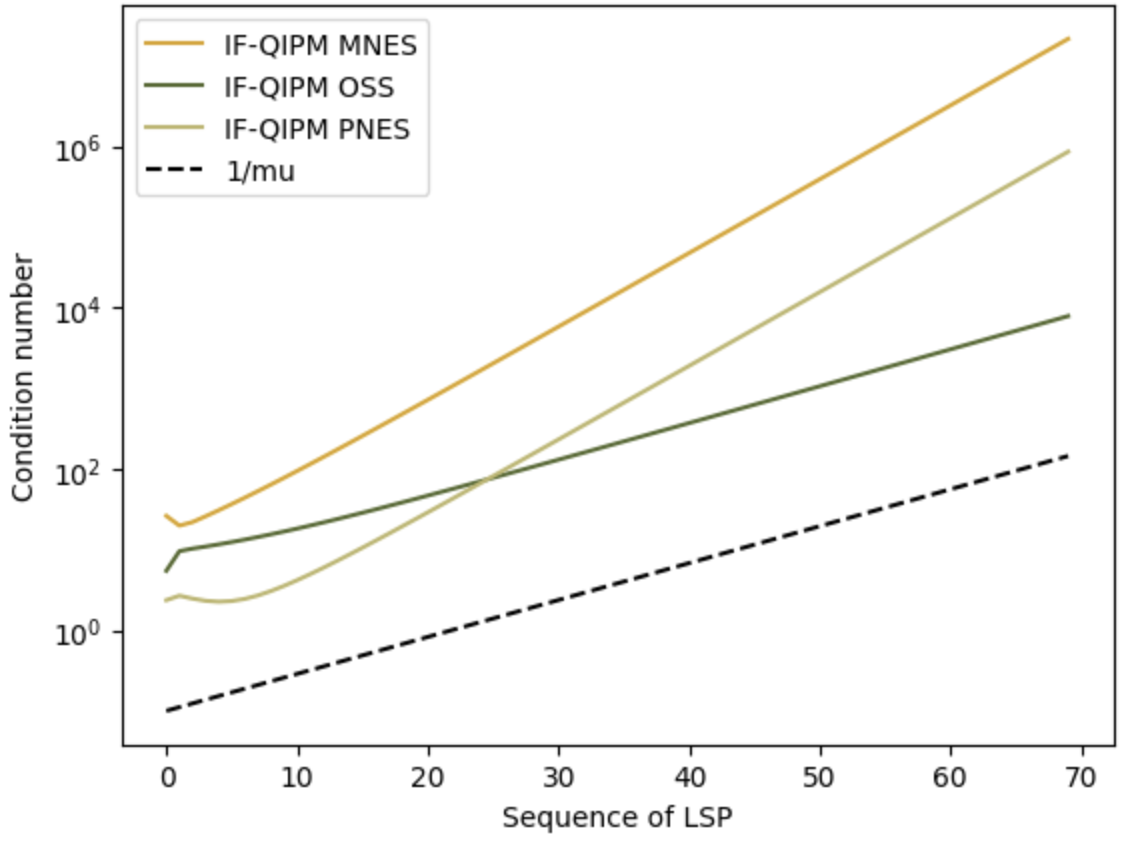}}
\subfloat[With iterative refinement]{\label{fig: WADIR}\includegraphics[width =0.5 \linewidth, trim = {.1cm .1cm .1cm .1cm}, clip]{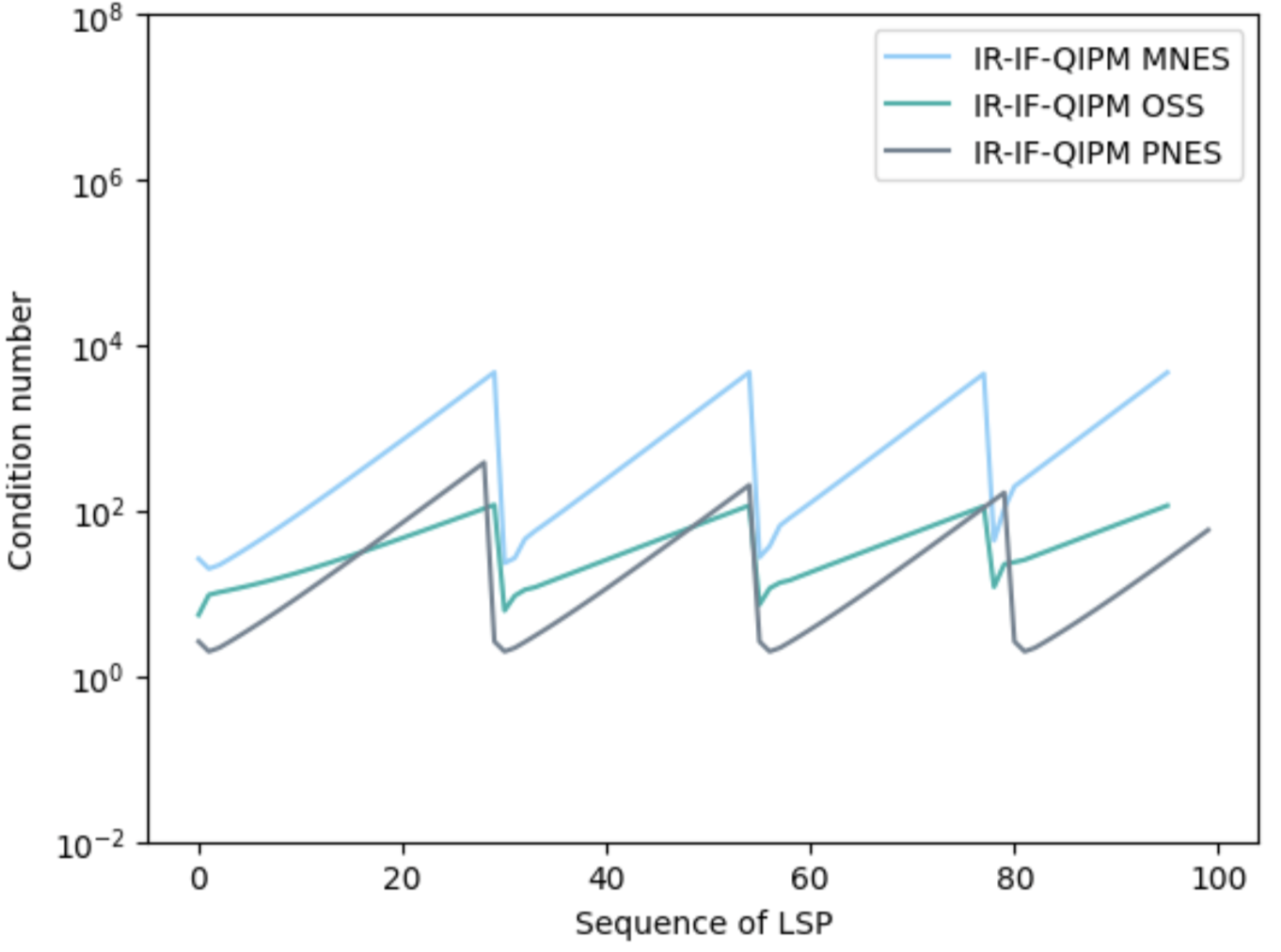}}
\caption{Condition number sequence of linear systems arising in different IF-QIPM to solve a degenerate LOP with
 $\kappa_A=10$ } \label{fig: WAD}
\end{figure}
The performance of IF-QIPMs for solving a primal degenerate LOP with well-conditioned coefficient matrix $A$ is depicted in Fig.~\ref{fig: WAD}. Although for PNES, there is a theoretical condition number bound, however as in this problem, depending on the input data this bound can be exponentially large. The condition number can grow with a slightly lower rate, but at the same rate as the one for for MNES. On the other hand, for degenerate problems, iterative refinement can help with condition numbers. As Fig.~\ref{fig: WADIR} shows, in the iterative refinements steps, we stop IF-QIPMs early, when $\mu=10^{-2}$ and so the condition number remains bounded. Then we restart the IF-QIPM for the refining problems, where the condition number is as low as the initial condition number. By IR, the condition number will not grow above an upper bound.

\begin{figure}[tbhp] \centering
\subfloat[Without iterative refinement]{\label{fig: IADQIPM}\includegraphics[width =0.5 \linewidth, trim = {.1cm .1cm .1cm .1cm}, clip]{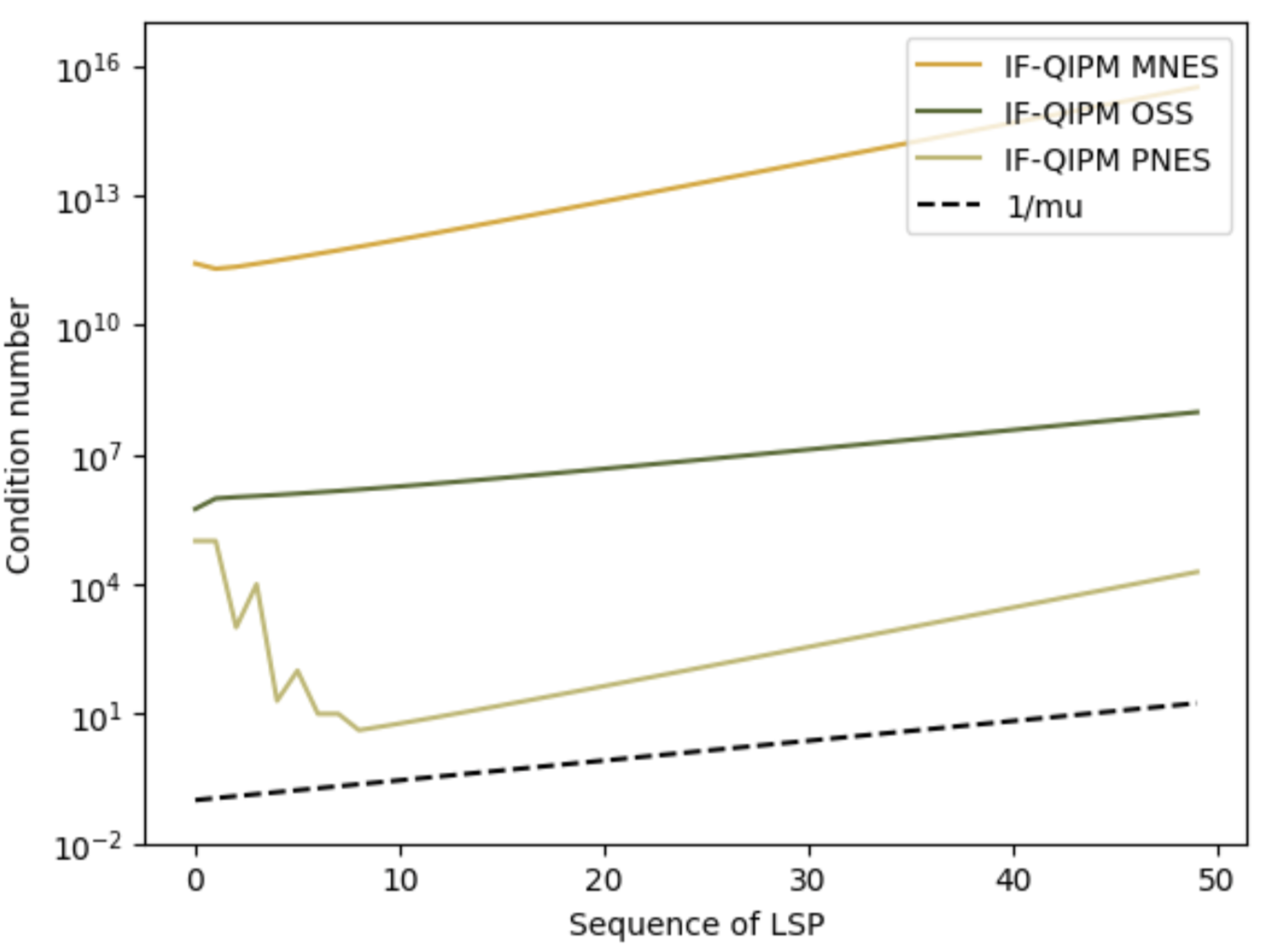}}
\subfloat[With iterative refinement]{\label{fig: IADIR}\includegraphics[width =0.5 \linewidth, trim = {.1cm .1cm .1cm .1cm}, clip]{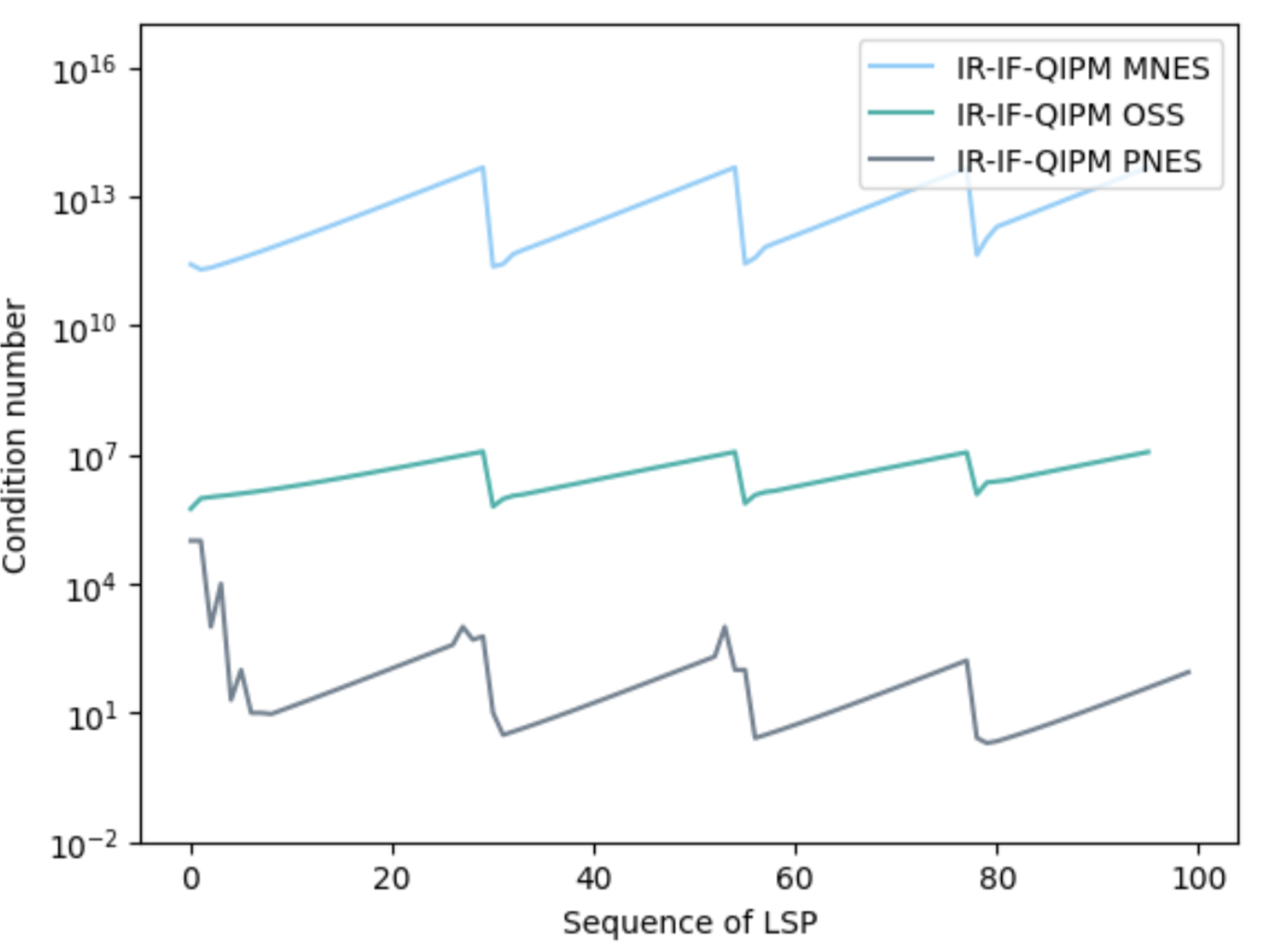}}
\caption{Condition number sequence of linear systems arising in different IF-QIPM to solve a degenerate LOP with
 $\kappa_A=10^{6}$ } \label{fig: IAD}
\end{figure}
Fig.~\ref{fig: IAD} shows how iterative refinement coupled with preconditioning can keep the condition number of the NES bounded during iterations of the QIPM for this challenging degenerate LOP with an ill-conditioned matrix. All in all, iterative refinement can mitigate the impact of degeneracy on the condition number of Newton systems. On the other hand, preconditioning is effective for addressing problems with an ill-conditioned matrix $A$.

We also solved 100 randomly generated problems with different IF-QIPMs using the IBM QISKIT simulator. Fig.~\ref{fig:stat} shows some statistics of solved problems. As we can see, with thwe OSS system we could not solve problems with more than 8 variables as the size of the linear systems that could be solved by the Qiskit simulator is limited to 16. In addition, the OSS has a nonsymmetric $n$-by-$n$ coefficient matrix. However, with MNES, we could solve an LOP with a million variables and 16 constraints as the dimension of MNES is dependent on the number of constraints. These results show that the proposed IF-QIPM is more adaptable to near-term devices. In addition, we can see that the iterative refinement coupled with preconditioning enables the solution of the problem with a larger condition number. In addition, with both inner and outer iterative refinement, we could improve the precision from $10^{-1}$ to $10^{-4}$ on average.
\begin{figure}
    \centering
    \includegraphics[scale=0.46]{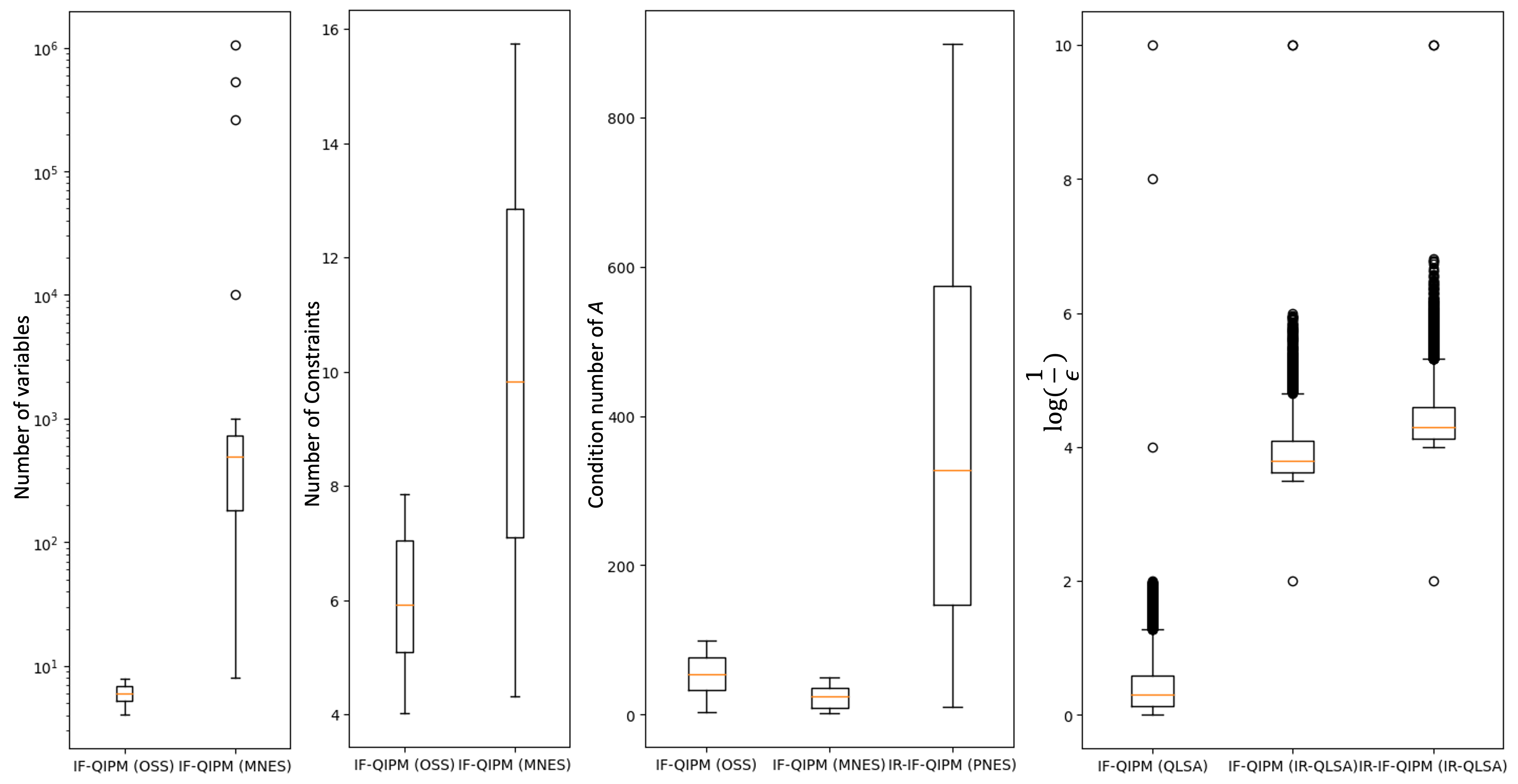}
    \caption{Statistics of 100 randomly generated LOPs solved by IF-QIPM using QISKIT Simulator (max time =2 hrs) }
    \label{fig:stat}
\end{figure}
\section{Conclusion}\label{sec: conclusion}
In this paper, we propose an inexact-feasible quantum interior point method in which we solve a modified normal equation system with QLSA+QTA. In addition, we apply an iterative refinement and preconditioning to mitigate the effect of condition number on the complexity of QIPMs. These classical ideas lead to some improvements in QIPMs outlines as follows:
\begin{itemize}
    \item By modifying NES, in each iteration of the proposed IF-QIPM, we solve a linear system with $m$-by-$m$ symmetric positive definite matrix, which is smaller than OSS with $n$-by$n$ nonsymmetric matrix. In other words, the proposed IF-QIPM needs fewer Qubits and gates. 
    \item We use an iterative refinement scheme coupled with preconditioning the NES that builds a uniform bound on the condition number and speeds up QIPMs w.r.t. precision, and condition number.
    \item By preconditioning the NES in the quantum setting, we achieved speed up w.r.t. the dimension compared to classical inexact approaches.
\end{itemize}
In Table~\ref{tab:compelxities}, the complexities of some recent classical and quantum IPMs are provided. As we can see, IR-IF-QIPM with MNES achieves the best complexity of IR-IF-QIPM using OSS with slightly better dependence on $\|A\|$, due to using quantum solver of \cite{mohammadisiahroudi2023Accurately}. By switching to preconditioned NES, complexity gets better with respect to a dimension but higher dependence on $\|A\|_F$. It is natural that calculating preconditioner on a quantum machine will have better complexity with respect to dimension but the challenge is addressing normalization factors in block-encoding. Another difference is dependence on $\bar{\phi}$ instead of $\omega$ which is an upper-bound for norm of optimal solution. It should be mentioned that both $\omega$ and $\bar{\phi}$ are constants depending on input data. However, for some problems, they can be extremely large. On the other hand don't condition number bound and numerical results, the one advantage of iterative refinement and preconditioning is mitigating the condition number. Mostly iterative refinement avoids the growing condition number of the Newton system in degenerate problems, and preconditioning alleviates the impact of the condition number of matrix $A$. All in all, QIPMs have the potential to speed up the solution of LOPs with respect to dimension compared to classical but they are more dependent on condition number and norm of the coefficient matrix. In this paper, we explored some classical ideas like using a better formulation of Newton's system and using iterative refinement coupled with preconditioning to shorten this gap. 
\begin{table}[]
\centering
\resizebox{\textwidth}{!}{%
\begin{tabular}{|c|c|c|c|c|c|}
\hline
Algorithm                & System & Linear System Solver & Quantum Complexity & Classical Complexity & Bound for $\kappa$ \\ \hline
IPM with Partial Updates \cite{Roos2005_Interior} & NES           &            Low rank updates          &                    &        $\Ocal(n^{3}L)$              &          \\ \hline
Feasible IPM \cite{Roos2005_Interior}            & NES           & Cholesky             &                    &                $\Ocal(n^{3.5}L)$      &          \\ \hline
II-IPM   \cite{Monteiro2003_Convergence}                & PNES           & PCG                   &                    &                $\Ocal(n^{5}L\bar{\chi}^2)$      &         $\bar{\chi}^2$  \\ \hline
II-QIPM \cite{mohammadisiahroudi2022efficient}                 & NES           & QLSA+QTA             &          $\tilde{\Ocal}_{n,\kappa_{A}, \omega}(n^{4}L\kappa_A^{2}\omega^{4}\|A\|^{4})$          &  $\tilde{\Ocal}_{ \omega}(n^{4}L)$                    &   $\Ocal(\kappa_{\Ahat}^{2} \omega^4)$       \\ \hline
IF-QIPM    \cite{IF-QIPMforLO}              & OSS           & QLSA+QTA             &    $\tilde{\Ocal}_{n,\kappa_{A}, \omega}(n^{2}L\kappa_A\omega^{2}\|A\|^{2})$                &       $\tilde{\Ocal}_{\mu^0}(n^{2.5}L)$              &     $\Ocal(\kappa_{\Ahat} \omega^2)$     \\ \hline
The proposed IR-IF-IPM                & MNES          & CG             &                    &    $\tilde{\Ocal}_{\mu^0}(n^{2.5}L\kappa_A^2\omega^4)$   &      $\Ocal(\kappa_{\Ahat}^{2} \omega^4)$    \\ \hline
The proposed IR-IF-IPM                & PNES          & PCG                  &                    &    $\tilde{\Ocal}_{\mu^0}(n^{3.5}L\bar{\chi}^2)$    &         $\bar{\chi}^2$ \\ \hline
The proposed IR-IF-QIPM                & MNES          & QLSA+QTA             &             $\tilde{\Ocal}_{n,\kappa_{\Ahat}, \|\Ahat\|,\|\bhat\|,\mu^0}(n^{2}L\kappa_{\Ahat}\omega^2\|\Ahat\|)$       &          $\tilde{\Ocal}_{\mu^0}(n^{2.5}L)$            &     $\Ocal(\kappa_{\Ahat}^{2} \omega^4)$     \\ \hline
The proposed IR-IF-QIPM               & PNES          & QLSA+QTA             &      $ \widetilde{\Ocal}_{ n, \frac{1}{\epsilon}} ( n^{1.5} L\bar{\chi}^3 \kappa_{A}^2   \left\| A \right\|^4_F   ) $              &            $\tilde{\Ocal}_{\mu^0}(n^{2.5})$          &     $\bar{\chi}^2$     \\ \hline
\end{tabular}%
}
\caption{Complexity of different IPMs for LO}
\label{tab:compelxities}
\end{table}

Using MNES also enables regularizing the Newton system. It is worth exploring the regularization in the quantum setting to address the impact of condition number on QIPMs. In addition, the proposed IR-IF-QIPM with preconditioned NES can be generalized to other conic problems such as Semi-definite optimization where the size of Newton systems may grow quadratically for large-scale problems. 

\begin{acks}
This work is supported by Defense Advanced Research Projects Agency as part of the project W911NF2010022: {\em The Quantum
Computing Revolution and Optimization: Challenges and Opportunities}. This work is also supported by National Science Foundation CAREER DMS-2143915.
\end{acks}

\bibliographystyle{ACM-Reference-Format}
\bibliography{sample-base}

\appendix

\end{document}